\renewcommand{\(}{$\,}
\renewcommand{\)}{\,$}
\newcommand{\cc}[1]{\mathscr{#1}}
\def\eqdef{\stackrel{\operatorname{def}}{=}}
\renewcommand{\hat}[1]{\widehat{#1}}
\renewcommand{\tilde}[1]{\widetilde{#1}}
\def\T{\top}
\def\bm{\varrho}
\def\T{\top}
\def\Var{\operatorname{Var}}
\def\Cov{\operatorname{Cov}}
\def\omegac{\omega'}
\def\omegad{\omega^{\circ}}
\def\entrl{\mathfrak{e}}
\def\kapla{\mathfrak{n}_{0}}
\def\lambdab{\lambda^{*}}
\newtheorem{lemma}{Lemma}
\newtheorem{theorem}{Theorem}
\def\E{I\!\!E}
\def\P{I\!\!P}
\newcommand{\be}{\begin{eqnarray}}
\newcommand{\ee}[1]{\label{eq:#1}\end{eqnarray}}
\newcommand{\nn}{\nonumber \\}
\newcommand{\ese}{\end{eqnarray*}}
\newcommand{\bse}{\begin{eqnarray*}}
\newcommand{\rf}[1]{~(\ref{eq:#1})}
\newcommand{\R}{\mathbb{R}}
\newcommand{\cB}{\mathcal{B}}
\newcommand{\cE}{\mathcal{E}}
\newcommand{\cI}{\mathcal{I}}
\newcommand{\cN}{\mathcal{N}}
\newcommand{\cO}{\mathcal{O}}
\newcommand{\cT}{\mathcal{T}}
\newcommand{\cU}{\mathcal{U}}
\newcommand{\cX}{\mathcal{X}}
\newcommand{\cZ}{\mathcal{Z}}
\DeclareMathOperator*{\argmin}{\mathrm{arg\,min}}
\newcommand{\diag}{\mathrm{diag}}
\newcommand{\Tr}{\mathrm{Tr}}
\numberwithin{figure}{section}
\begin{document}
%
\author{ \(  \text{\sc Elmar Diederichs}^1 \), \(  \text{\sc Anatoli Juditski}^3 \), \\ \( \text{\sc Vladimir Spokoiny}^{2} \), \( \text{\sc Christof Schütte}^1 \) \\
\\ \\ \\
\(  ^1\text{Institute for Mathematics and Informatics, Free University Berlin} \) \\ \( \text{Arnimallee 6, 14195 Berlin, Germany} \) \\ \\
\(  ^2\text{Weierstrass Institute and Humboldt University} \) \\ \( \text{Mohrenstr. 39, 10117 Berlin, Germany} \) \\ \\
\(  ^3\text{LJK, Universit\'e J. Fourier, } \)\\ \( \text{BP 53 38041 GRENOBLE cedex 9, France} \) \\ \\
}
\title{\sc Sparse NonGaussian Component Analysis\footnote{Supported by DFG research center {\sc Matheon} ''Mathematics for key technologies'' (FZT 86) in Berlin.}}
\date{\today}
\maketitle
\thispagestyle{empty}
\begin{abstract}
{\noindent}Non-gaussian component analysis (NGCA) introduced in
\cite{a3-3} offered a method for high dimensional data analysis allowing
for identifying a low-dimensional non-Gaussian component of the whole
distribution in an iterative and structure adaptive way. An important step
of the NGCA procedure is identification of the non-Gaussian subspace using
Principle Component Analysis (PCA) method. This article proposes a new
approach to NGCA called \emph{sparse NGCA} which replaces the PCA-based
procedure with a new the  algorithm we refer to as \emph{convex
projection}.\\[5ex]

{\noindent}{\em keywords:} reduction of dimensionality, model reduction,
sparsity, variable selection, principle component analysis, structural
adaptation, convex projection
\\[3ex]
{\noindent}{\em Mathematical Subject Classification:} 62G05, 60G10, 60G35, 62M10, 93E10
\end{abstract}

\newpage
\section{Introduction}

Numerous mathematical applications in econometrics or biology are
confronted with high dimensional data. Such data sets present new
challenges in data analysis, since often the data have dimensionality
ranging from hundreds to hundreds of thousands. This means an exponential
increase of the computational burden for many methods. On the other hand
the sparsity of the data in high dimensions entails that data thin out in
the local neighborhood of a given point \( x \). Hence statistical methods
are not reliable in high dimensions if the sample size remains of the same
order. This problem is usually referred to as ''curse of dimensionality''
(cf. \cite{Hastie01}, \cite{Wasserman}). The standard approach to deal
with the high dimensional data is to introduce a \emph{structural
assumption} which allows to reduce the complexity or intrinsic dimension
of the data without significant loss of statistical
information \cite{Roweis2000}, \cite{Mizuta04}.\\

Let a random phenomenon is observed in the high dimensional space \(
\R^{d} \) while the intrinsic dimension of this phenomenon is much
smaller, say \( m \). From a geometrical point of view \( m \) is the
dimension of a linear subspace that approximately contains the structure
of the sample data. Alternatively we can consider this structure as a low
dimensional signal embedded in high dimensional noise. Consequently a
lower dimensional, compact representation that according to some
criterion, captures the interesting information in the original data, is
sought. In this paper we assume that we have a sample of data  lying
approximately in a \(  m\leq d \) dimensional linear (target) subspace \(
\cI\subseteq \R^{d} \) of \( \R^{d} \). In order to reduce the problem
dimension one looks for a mapping from the original
data space onto this subspace.\\

In the statistical literature the Gaussian components of the data
distribution are often considered as entropy maximizing and consequently
as non-informative noise \cite{Cover1991}. It is well known that for
high-dimensional clouds of points most low-dimensional projections are
approximately Gaussian \cite{Friedman1984}. 
The {\em Non-Gaussian Component Analysis} (NGCA), introduced in
\cite{a3-3}, is based on the assumption that the structure of the data is
represented by a low dimensional non-Gaussian component of the observation
distribution, as opposed to a full dimensional Gaussian component,
considered as noise. Thus the objective of NGCA is to ''kill the noise''
rather than to describe the whole multidimensional distribution. Note that
the suggested way of treating the Gaussian distribution as a pure nuisance
in general exclude the use of the classical \emph{Principle Component
Analysis} (PCA) which simply searches for the directions with
of largest variance.\\

In the same way as a number of projection methods of feature extraction (e.g.
Projection Pursuit \cite{Huber85}, Partial Least Square Regression
\cite{Wold82,Wold87}, Conditional Minimum Average Variance Estimation
\cite{a3-5} or Sliced Inverse Regression \cite{Li1991,Cook1998,Cook2001}),
when implementing the NGCA we decompose the problem of dimension reduction
into two tasks: the first one is to extract from data a set of vectors which
are close to the target space \( \cI \); the second is to construct a basis of
the target space  from these vectors. These characteristics can also be found
in the unsupervised, data driven approach of SNGCA, presented in this article.
When compared to available dimension reduction methods (e.g. Principal Component
Analysis \cite{Jolliffe01}, Independent Component Analysis \cite{Hyvaer99} or
Singular Spectrum Analysis \cite{Goljandina}) SNGCA does not assume
any a priori knowledge about the density of the original data.\\

The proposed method, as well as NGCA, is an iterative algorithm which is
structure adaptive in the sense that every new step essentially uses the
result of previous iterations. The main difference between NGCA and SNGCA
algorithms lies in the way the information is extracted  from the data.
The algorithm of NGCA heavily relies upon the Euclidean projection and the
PCA of the set of the estimated vectors. In the case when data dimension
is important and the sample size is moderate, computation of the \( l_{2}
\)-projection can amplify the noise. Moreover, when most of the estimated
vectors do not contain information about the space \( \cI \) but are
mainly noise, the results of using the PCA algorithm to extract the basis
of feature space can be very poor. The reason for that is that the PCA
algorithm is known to accumulate the noise. To address this issue the
SNGCA uses convex programming techniques to estimate elements of the
target subspace by ``convex projection'', what allows to bound uniformly
the estimation error. Further, another technique of convex analysis, based
on computation of rounding ellipsoids of the set of estimated vectors, is
used to extract the subspace information. These changes allow the SNGCA
algorithm to treat large families of candidate vectors without increasing
significantly the variance of the estimation
of the target subspace. \\

The paper is organized as follows. First  we describe the
considered set-up in Section~\ref{SFramework} and  discuss the main ideas behind the proposed approach.
The formal description of the algorithm is given in Section~\ref{SAlgorithms}. A
simulation study of the algorithms is presented in Section~\ref{numerics}, where we
compare the performance obtained by SNGCA algorithms and by several other methods of
feature extraction.

\section{Non-Gaussian Component Analysis}\label{SFramework}

\subsection{The setup}
The following setting is due to \cite{a3-3}. Let \( X_{1},...,X_{N} \) be
i.i.d. from a distribution \( \P \) in \( \R^{d} \). We suppose that \( \P
\) possesses a density \( \rho \) with respect to the Lebesgue measure on
\( \R^{d} \), which can be decomposed as follows: \be
    \rho(x)=\phi_{\mu=0,\Sigma}(x)q(Tx).
\ee{modeldensity} Here \( \phi_{\mu,\Sigma} \) stands for the density of
the multivariate normal distribution \( \cN(\mu,\Sigma) \) with parameters
\( \mu\in \R^{d} \) (expectation) and \( \Sigma\in \R^{d\times d} \)
positive definite (covariance matrix). The function \( q:\R^{m}\to \R \)
with \( m\leq d \) has to be nonlinear and smooth. \( T\in \R^{m\times d}
\) is an unknown linear mapping. Naturally, we refer to \( \cI={\rm
range}\; T \) as {\em target} or {\em non-Gaussian subspace}. For the sake
of simplicity let us assume \( \E[X]=0 \) where \( \E[X] \) stands for
the expectation of \( X \). \\

Though the representation  \rf{modeldensity} is not uniquely defined, the
subspace \( \cI \subset \R^{d} \) is well defined as well as the Euclidean
projector \( \Pi^{*} \) on \( \cI \). By analogy with the regression case
\cite{Cook1998,Li1992,Li1991}, we could also call \( \cI \) {\em the
effective dimension reduction space} (EDR-space). We call \( m \) {\em
effective dimension} of the data. In many applications \( m \) is unknown
and has to be recovered from the data. Our task is to recover \( \Pi^{*}
\). The model structure \rf{modeldensity} allows the following
interpretation (cf. \cite{a3-3}) : we can decompose the random vector \( X
\) into two independent components
\begin{eqnarray*}
    X = \Pi^{*} X + (I-\Pi^{*}) X = Z+u,
\end{eqnarray*}
where \( Z \) is a non-Gaussian \( m \)-dimensional signal and \( u \) is \( (d-m) \)-dimensional normal noise. \\

As we have already noticed in the introduction, SNGCA algorithm relies
upon two basic operations: the first is to construct a set of vectors, say
\( \beta_{1},...,\beta_{J} \), which are ''close'' to the target subspace;
the objective of the second is to compute an estimate \( \hat{\Pi} \) of
the Euclidean projector \( \Pi^{*} \) on \( \cI \) using the set \(
\{\beta_{j}\}_{j=1}^J \).

\subsection{Estimation of elements of the target subspace}

\par{\bf Estimation of elements of \( \cI \).}
The implementation of the first step of SNGCA is based on the following result (cf.
Theorem 1 of \cite{a3-3}):
\begin{theorem}\label{magic}
Let \( X \) follow the distribution with the density \( \rho \) which
satisfies \rf{modeldensity} and let \( \E[X]=0 \). Suppose that a function
\( \psi:\,\R^{d}\to \R \) is continuously differentiable. Define \be
    \beta(\psi)
    :=
    \E \bigl[\nabla \psi(X)\bigr]
    =
    \int \nabla \psi(x) \, \rho(x)\, dx ,
\ee{originalbeta}
where \( \nabla \psi \) stands for the gradient of \( \psi \). Then there exists a
vector \( \beta \in \cI \) such that
\begin{eqnarray*}
    \| \beta(\psi) - \beta \|_{2}
    & \leq &
    \big\| \Sigma^{-1} \E[ X \psi(X) ] \big\|_{2}
    \nn
    &=&
    \Big\| \Sigma^{-1} \int x \psi(x)\rho(x)\;dx \Big\|_{2}.
\end{eqnarray*}
In particular,  if \( \E[X \psi(X)]=0\), then \( \beta(\psi)\in \cI \).
\end{theorem}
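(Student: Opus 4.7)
The plan is to derive a decomposition of $\beta(\psi)=\E[\nabla\psi(X)]$ via an integration-by-parts identity in which the part lying in $\cI$ is made explicit, and the residual turns out to be exactly $\Sigma^{-1}\E[X\psi(X)]$.

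First, I would differentiate the log-density. Writing $g(u) = \nabla_u \log q(u)$ for $u \in \R^m$, a direct chain-rule computation from the factorization $\rho(x) = \phi_{0,\Sigma}(x)\,q(Tx)$ yields
\begin{equation*}
\nabla \log \rho(x) \;=\; -\Sigma^{-1} x + T^{\T} g(Tx),
\end{equation*}
so that $\nabla \rho(x) = -\Sigma^{-1} x\, \rho(x) + T^{\T} g(Tx)\, \rho(x)$. The key observation is that the second term on the right belongs to $\text{range}(T^{\T}) = \cI$ pointwise in $x$.

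Next, assuming enough decay at infinity (guaranteed by the Gaussian factor $\phi_{0,\Sigma}$ together with mild growth conditions on $\psi$, which is the one technical point to be careful about), I would apply integration by parts componentwise:
\begin{equation*}
\beta(\psi) \;=\; \int \nabla\psi(x)\,\rho(x)\,dx \;=\; -\int \psi(x)\,\nabla\rho(x)\,dx.
\end{equation*}
Substituting the expression for $\nabla\rho$ gives
\begin{equation*}
\beta(\psi) \;=\; \Sigma^{-1} \E\bigl[X\,\psi(X)\bigr] \;-\; T^{\T} \E\bigl[g(TX)\,\psi(X)\bigr].
\end{equation*}

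Now define $\beta := -T^{\T}\E[g(TX)\psi(X)]$. Since $T^{\T}$ maps into $\cI$, we have $\beta \in \cI$, and the identity above rearranges to $\beta(\psi) - \beta = \Sigma^{-1}\E[X\psi(X)]$. Taking Euclidean norms yields equality in the claimed bound, which in particular gives the inequality stated in the theorem, and the final assertion ``if $\E[X\psi(X)]=0$ then $\beta(\psi)\in\cI$'' follows immediately.

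The only real obstacle is the rigorous justification of the integration by parts: one needs $\psi\rho$ to vanish sufficiently fast at infinity so that the boundary terms disappear. Given the Gaussian prefactor this is a routine domination argument (for instance, $|\psi(x)|$ of at most polynomial growth suffices), so I would either impose a mild growth hypothesis on $\psi$ or invoke a standard Stein-type lemma for Gaussian-dominated densities. All other steps are algebraic manipulations of the gradient of the log-density.
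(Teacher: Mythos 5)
Your argument is correct and is essentially the canonical proof of this result: the paper itself gives no proof of Theorem \ref{magic} but imports it as Theorem 1 of \cite{a3-3}, where the derivation is exactly your decomposition $\nabla\rho(x) = -\Sigma^{-1}x\,\rho(x) + T^{\T}g(Tx)\,\rho(x)$ followed by integration by parts, with $\beta = -T^{\T}\E[g(TX)\psi(X)] \in \mathrm{range}(T^{\T}) = \cI$. Your two caveats are the right ones (decay of $\psi\rho$ at infinity for the boundary terms, and positivity of $q$ if one insists on working with $\log q$ rather than differentiating the product $\phi_{0,\Sigma}(x)q(Tx)$ directly), and neither affects the validity of the conclusion under the paper's standing smoothness assumptions.
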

The bound of Theorem \ref{magic} implies that
\be
    \| (I - \Pi^{*}) \beta(\psi) \|_{2}
    \le
    \Big\| \Sigma^{-1} \int x \psi(x) \rho(x) \; dx \Big\|_{2},
\ee{bounds} where \( I \) is the \( d \)-dimensional identity matrix and
\( \Pi^{*} \) is the orthogonal projector on \( \cI \). \\

Based on this result, \cite{a3-3} suggested the following way of
constructing a set of vectors \( \beta \) which approximate the target
space \( \cI \). Let \( h_{1},...,h_{L} \) be smooth bounded functions on
\( \R^{d} \). Define \( \gamma_{l} = \E[X h_{l}(X)] \) and \( \eta_{l} =
\E[\nabla h_{l}(X)] \). These vectors are not computable because they rely
on the unknown data distribution, but they can be well estimated from the
given data. Next, for any vector \( c \in \R^{L} \), define the vectors \(
\beta(c), \gamma(c) \in \R^{d} \) with
\begin{eqnarray*}
    \beta(c) = \sum_{l=1}^L c_{l} \eta_{l},
    \qquad
    \gamma(c) = \sum_{l=1}^L c_{l} \gamma_{l}
\end{eqnarray*}
Then by Theorem \ref{magic},  \( \beta(c) \in \cI \) conditioned that \(
\gamma(c)=0 \). Indeed, if we set \( \psi(x)=\sum_{l} c_{l} h_{l}(x) \),
then \( \E[X\psi(X)]=0 \), and by \rf{bounds},
\[
    \gamma(c) = \E [\nabla\psi(X)] \in \cI.
\]
The approach of \cite{a3-3} is to compute the vectors of coefficients \( c \in \R^{L} \)
which ensure \( \gamma(c) \approx 0 \) and then to use the corresponding empirical
analogs of \( \beta(c) \) to estimate the target space.
More precisely, given the observations \( X_{1},...,X_{N} \) compute the set of
vectors (empirical counterparts of \( \eta_{l} \) and \( \gamma_{l} \)) according to
\begin{eqnarray}
\label{hathat}
    \hat{\gamma}_{l}
    =
    N^{-1} \sum_{i=1}^{N} X_{i} h_{l}(X_{i}),
    \quad
    \hat{\eta}_{l}
    =
    N^{-1} \sum_{i=1}^{N} \nabla h_{l}(X_{i}). \qquad
\end{eqnarray}
Similarly define for \( c \in \R^{L} \)
\begin{eqnarray*}
    \hat{\beta}(c) = \sum_{l=1}^L c_{l} \hat{\eta}_{l},
    \qquad
    \hat{\gamma}(c) = \sum_{l=1}^L c_{l} \hat{\gamma}_{l} \, .
\end{eqnarray*}
One can expect that for vectors \( c \) with \( \hat{\gamma}(c) = 0 \),
the vectors \( \hat{\beta}(c) \) are ''close'' to \( \cI \). \\

Below we follow a similar way of constructing \( \hat{\beta}(c) \) with an
additional constraint that the considered vectors of coefficients \( c \)
satisfy \( \| c \|_{1} \le 1 \). This constraint allows for both
efficient numerical algorithms and sharp error bounds.\\

The test functions \( h_{l} \) can be generated as follows: let \( \cB_{d}
\) be a unit ball \( \cB_{d}=\{ x \in \R^{d}| : \, \|x\|_{2} \le 1\} \)
and let \( f(x,\omega) \), \( f:\, \cB \times \R^{d} \to \R \) be a
continuously differentiable function. Consider the functions \( h_{l}(x) =
f(x,\omega_{l}) \), for some \( \omega_{l} \in \cB, \;l=1,...,L \). The
choice of family \( f(\cdot,\omega) \) is an important  parameter of the
algorithm design. For instance, in the simulation examples of Section
\ref{numerics} we consider the following families: \be \label{eq:tanh}
    f(x,\omega)
    &=&
    \tanh(\omega^{\T}x) e^{-\alpha\|x\|^{2}_{2}/2},
    \\
    f(x,\omega)
    &=&
    [1+(\omega^{\T}x)^{2}]^{-1} \exp^{\omega^{\T}x - \alpha \|x\|^{2}_{2}/2},
\ee{asymmeg} and \( \omega_{l}, \;l=1,...,L \) are  unit vectors in \(
\R^{d} \). The next result justifies the proposed construction.

\begin{theorem}
\label{empir1}
Suppose that \( f \) is continuously differentiable in \( w \) and for
some fixed constant \( f^{*}_{1} \) and any  \( \omega \in \cB_{d}, \, x \in \R^{d} \)
\begin{eqnarray*}
    \Var \bigl[ X_{j} \, f(X,\omega) \bigr]
    \le
    f^{*}_{1},
    \quad
    \Cov \bigl[ X_{j} \, \nabla_{\omega} f(X,\omega) \bigr]
    \le
    f^{*}_{1} I,
    \nn
    \Var \biggl[ \frac{\partial}{\partial x_{j}} f(X,\omega) \biggr]
    \le
    f^{*}_{1},
    \quad
    \Cov \biggl[ \nabla_{\omega} \frac{\partial}{\partial x_{j}} f(X,\omega) \biggr]
    \le
    f^{*}_{1} I,
\end{eqnarray*}
Consider the (random) set
\be
    \cc{C}
    =
    \bigl\{ c\in \R^{L} : \| c \|_{1} \le 1, \, \hat{\gamma}(c) = 0 \bigr\}.
\ee{setC}
Then for any \( \varepsilon > 0 \) there is a set \( A \subset \Omega \) of probability at
least \( 1-\varepsilon \) such that on \( A \) for all \( c \in \cc{C} \),
\[
    \bigl\|(I-\Pi^{*}) \hat{\beta}(c)\bigr\|_{2}
    \le
    \sqrt{d} \, \delta_{N} \bigl( 1 + \| \Sigma^{-1} \|_{2} \bigr),
\]
where
\begin{eqnarray*}
    \delta_{N}
    =
    N^{-1/2} \inf_{\lambda \le \lambdab_{1} N^{1/2}}
    \bigl\{
        5 \kapla f^{*}_{1} \lambda
        + 2\lambda^{-1} \bigl[ \entrl_{d} + \log(2d/\varepsilon) \bigr]
    \bigr\}
\end{eqnarray*}
and \( \entrl_{d} = 4 d \log 2 \).
\end{theorem}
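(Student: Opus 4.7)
The plan is to decompose
\[
(I - \Pi^{*})\hat{\beta}(c) = (I - \Pi^{*})\beta(c) + (I - \Pi^{*})\bigl(\hat{\beta}(c) - \beta(c)\bigr),
\]
where $\beta(c) = \E[\nabla \psi_c(X)]$ for $\psi_c(x) := \sum_{l=1}^{L} c_l h_l(x)$. Since $(I-\Pi^{*})$ is a projector, the second summand is bounded by $\|\hat{\beta}(c) - \beta(c)\|_{2}$. For the first summand, Theorem~\ref{magic} applied to $\psi_c$ together with \rf{bounds} gives $\|(I-\Pi^{*})\beta(c)\|_{2} \le \|\Sigma^{-1}\|_{2}\,\|\gamma(c)\|_{2}$, and the defining constraint $\hat{\gamma}(c)=0$ of $\cc{C}$ lets us rewrite $\gamma(c)=\gamma(c)-\hat{\gamma}(c)$. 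Thus the whole problem reduces to a uniform control over $\{c:\|c\|_{1}\le 1\}$ of the empirical deviations $\|\gamma(c)-\hat{\gamma}(c)\|_{2}$ and $\|\beta(c)-\hat{\beta}(c)\|_{2}$.

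By linearity of $c\mapsto \gamma(c)$ and $c \mapsto \beta(c)$ together with the $\ell_{1}$-constraint,
\[
\|\gamma(c)-\hat{\gamma}(c)\|_{2} \le \sup_{\omega\in\cB_{d}}\|\gamma(\omega)-\hat{\gamma}(\omega)\|_{2}, \qquad \|\beta(c)-\hat{\beta}(c)\|_{2} \le \sup_{\omega\in\cB_{d}}\|\eta(\omega)-\hat{\eta}(\omega)\|_{2},
\]
where $\gamma(\omega):=\E[Xf(X,\omega)]$, $\eta(\omega):=\E[\nabla_{x}f(X,\omega)]$ and $\hat{\gamma}(\omega),\hat{\eta}(\omega)$ are the obvious empirical analogs. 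The task has therefore become a uniform empirical-process bound indexed by the unit ball $\cB_{d}$. Working coordinate by coordinate pulls out a factor $\sqrt{d}$ via $\|v\|_{2}\le\sqrt{d}\max_{j}|v_{j}|$, so it suffices to control $\sup_{\omega,j}|\hat{\gamma}^{(j)}(\omega)-\gamma^{(j)}(\omega)|$ and its $\eta$-analog.

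For a fixed $(\omega,j)$, the variance hypotheses on $X_{j}f(X,\omega)$ and on $(\partial/\partial x_{j})f$ feed directly into a Bernstein-type exponential-moment bound, with $\kapla$ playing the role of the sub-exponential constant governing higher moments. This gives, for $\lambda$ in the Bernstein range,
\[
\P\Bigl(|\hat{\gamma}^{(j)}(\omega)-\gamma^{(j)}(\omega)| > t N^{-1/2}\Bigr) \le 2\exp\!\bigl(-\lambda t + C\,\kapla f^{*}_{1}\lambda^{2}\bigr).
\]
Passage to a bound uniform in $\omega\in\cB_{d}$ is then done by covering: the hypothesis $\Cov[X_{j}\nabla_{\omega}f(X,\omega)] \le f^{*}_{1}I$ (and its $\partial f/\partial x_{j}$ analog) controls the $\omega$-Lipschitz constant of both processes, so a $\delta$-net of cardinality $\le(1+2/\delta)^{d}\le e^{\entrl_{d}}$ suffices, the Lipschitz discretization error being absorbed at the same scale. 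A union bound over the net, over the $d$ coordinates and over the two processes $\gamma,\eta$ produces the contribution $\entrl_{d}+\log(2d/\varepsilon)$.

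Combining the union-bounded tail with the Bernstein exponent, solving for $t$ and optimizing over $\lambda\le\lambdab_{1}N^{1/2}$ reproduces exactly the expression displayed for $\delta_{N}$; plugging these estimates back into the initial decomposition yields the factor $\sqrt{d}(1+\|\Sigma^{-1}\|_{2})$. The main obstacle is precisely this uniform empirical-process step: one must combine the variance control of $f$ and $\nabla_{\omega}f$ with the $d$-dimensional metric entropy of $\cB_{d}$ in such a way that the final two-term expression cleanly separates the fluctuation scale $\kapla f^{*}_{1}\lambda$ from the entropy-plus-confidence term $\lambda^{-1}(\entrl_{d}+\log(2d/\varepsilon))$, with constants matching the displayed $5\kapla f^{*}_{1}$ and $2$.
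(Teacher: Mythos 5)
Your proposal follows essentially the same route as the paper: the same deterministic reduction (split off $\hat{\beta}(c)-\beta(c)$, bound $(I-\Pi^{*})\beta(c)$ via Theorem~\ref{magic} and the constraint $\hat{\gamma}(c)=0$, use $\|c\|_{1}\le 1$ and $\|v\|_{2}\le\sqrt{d}\|v\|_{\infty}$), combined with the same uniform empirical-process bound over $\omega\in\cB_{d}$ via exponential moments and the metric entropy of the ball, which the paper packages as Lemma~\ref{Lemp} (citing a chaining lemma where you invoke an explicit $\delta$-net). The argument is correct and matches the paper's proof in structure and in where the constants $\sqrt{d}$, $\entrl_{d}$, and $\log(2d/\varepsilon)$ come from.
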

The proof of the theorem is given in the appendix.
\\

Due to this result, any vector \( c \in \cc{C} \) can be used to produce a
vector \( \hat{\beta}(c) \) which is close to the target subspace \( \cI
\). However, such constructed vectors are only informative if its length
is significant relative to the estimation error.
\medskip

We therefore compute a family of such coefficient vectors \( c \) by
solving the following optimization problems: for a fixed unit vector \(
\xi \in \R^{d} \) called a \emph{probe vector}, find
\be
    \hat{c}
    =
    \argmin_{c \in \R^{L}: \, \| c \|_{1} \le 1} \|\xi - \hat{\eta}(c) \|_{2},
    \mbox{ subject to }
    \hat{\gamma}(c)=0 .
\ee{chat}
where \( \hat{\eta}(c)= \sum_l c_l\hat{\eta}_l\). This is a
convex optimization problem which can be efficiently solved by some
numerical procedures, e.g. by the interior point method. Then we set
\be
    \hat{\beta}
    =
    \hat{\beta}(\hat{c})
    =
    \sum_{l} \hat{c}_{l} \hat{\eta}_{l} \, .
\ee{betaj1}
It can be easily seen that for \( \xi \perp \cI \), the solution \(
\hat{\beta} \) fulfills \( \hat{\beta} \approx 0 \). On the contrary, if
\( \xi \in \cI \), then there is a solution with significantly positive \(
\| \hat{c} \|_{1} \) and \( \| \hat{\beta}(\hat{c}) \|_{2} \). This leads
to the following strategy: In the first step of the algorithm when there
is no information about \( \cI \) available, the probe vectors \(
\xi_{1},...,\xi_{J} \) in \( \R^{d} \) are generated randomly from \(
\cB_{d} \). In the next steps we apply the idea of structural adaptation
by generating the essential part of the vectors \( \xi_{j} \) from the
estimated subspace \( \tilde{\cI} \). For details see Section \ref{SAlgorithms}.
\\

We address now the implementation of the second step of SNGCA -- inferring
the projector \( \Pi^{*} \) on \( \cI \) from estimations \(
\{\hat{\beta}_{j}\}_{j=1}^J \) of elements of \( \cI \).
\\

{\bf Recovering  the target subspace.} Suppose that we are given vectors
\( \hat{\beta}_{1},...,\hat{\beta}_{J} \) which satisfy
\[
    \|\hat{\beta}_{j} - \beta_{j}\|_{2}
    \le
    \bm,
\]
for some \( \beta_{j} \in \cI \), \( j=1,...,J \).
The problem of estimating the subspace
\( \cI \) from \( \hat{\beta}_{j} \) is a special
case of the so called \emph{Reduced Rank Regression} (RRR) problem.
A simple and popular PCA estimate of the projector \( \Pi^{*} \) on \( \cI \)
is given by solving the quadratic optimization problem
\begin{eqnarray*}
    \hat{\Pi}
    =
    \argmin_{\Pi_{m}} \,
    \sum_{j=1}^{J} \| (I - \Pi_{m}) \hat{\beta}_{j} \|_{2}^{2},
\end{eqnarray*}
where the minimum is taken over all projectors of rank \( m \). One can
easily verify that \( \hat{\Pi} \) projects on the subspace in \( \R^{d}
\) generated by the first \( m \) principal eigenvectors of the matrix \(
\sum_{j} \hat{\beta}_{j} \hat{\beta}_{j}^{\T} \). However, if the number
of informative vectors \( \hat{\beta}_{j} \) is small with respect to \( J
\),  the quality of estimate \( \hat{\Pi} \) can be extremely poor. To
address this drawback of the PCA solution we consider a sparse estimate of
\( \cI \) which uses {\em rounding ellipsoids} for the set
\( \{\hat{\beta}_{j}\}_{j=1}^J \).
\\

For a symmetric positive-definite matrix \( B \) and \( r > 0 \), the ellipsoid
\( \cE_{r}(B) \) is defined as
\[
    \cc{E}_{r}(B)
    =
    \{ x \in \R^{d} \mid x^{\T} B x \le r^{2} \},
\]
For \( \alpha \le 1 \),\( \cE(B) \equiv \cc{E}_{1}(B) \) is \( \alpha
\)-\emph{rounding} ellipsoid for a convex set \( \cc{S} \) if
\[
    \cE_{1/\alpha}(B) \subseteq \cc{S} \subseteq \cE(B).
\]
Note that such ellipsoid exists with \( \alpha = d^{-1/2} \) due to the
Fritz John theorem \cite{John48}. Furthermore, numerically efficient
algorithms for computing \( \sqrt{d} \)-rounding ellipsoids are available,
see e.g. \cite{Nestreov04}. So, for recovering the spatial information
from the vector system \( \{\pm\hat{\beta}_{j}\}_{j=1}^J \) one can
look for the \( d^{1/2} \) rounding ellipsoid for the convex hull \(
\cc{S} \) of points \( \{\pm\hat{\beta}_{j}\}_{j=1}^J \).
\\

We measure the quality of estimation of the subspace \( \cI \) by the
closeness of the estimated projector \( \hat{\Pi} \) to \( \Pi^{*} \):
\begin{eqnarray}\label{error-criterion}
    \varepsilon(\cI,\hat{\cI})= \|\hat{\Pi} - \Pi^{*} \|_{2}^{2}
    =
    \Tr \bigl[ (\hat{\Pi}-\Pi^{*})^{2} \bigr].
\end{eqnarray}
The property of the spatial information recovery, based on the idea of
rounding ellipsoids, is described in the following theorem.
\begin{theorem}
\label{round1}
\par 1. Let \( \cc{S} \) be the convex envelope of the set \( \{\pm
\hat{\beta}_{j}\},\;j=1,...,J \), and let \( \cc{E}_{1}(B) \) be an
ellipsoid inscribed into \( \cc{S} \),  such that \( \cc{E}_{\sqrt{d}}(B)
\) is \( \sqrt{d} \)-rounding ellipsoid for  \( \cc{S} \). Then for any
unit vector \( v \perp \cI \),
\[
    v^{\T} B^{-1} v \le \bm^{2}.
\]
2. If there is \( \mu \in \R^{J} \) with \( \mu_{j}\ge 0 \) and
\( \sum_{j} \mu_{j} = 1 \) such that
\[
    \lambda_{m}\biggl( \sum_{j} \mu_{j} \beta_{j} \beta_{j}^{\T} \biggr)
    \ge
    \lambda^{*}
    >
    2\bm^{2},
\]
where \( \lambda_{m}(A) \) stands for the \( m \)-th principal eigenvalue of \( A \),
then
\begin{eqnarray}
\label{lambdam}
    \lambda_{m}(B^{-1})
    \ge
    \frac{\lambda^{*}-2\bm^{2}}{2\sqrt{d}} \, .
\end{eqnarray}
3. Moreover, let
\( \hat{\Pi} = \hat{\Gamma}_{m} \hat{\Gamma}_{m}^{\T} \) where
\( \Gamma_{m} \) is the matrix of \( m \) principal eigenvectors of \( B^{-1} \). Then
\[
    \|\hat{\Pi}-\Pi^{*}\|_{2}^{2}
    \le
    \frac{4\bm^{2}d\sqrt{d}}{\lambda^{*}-2\bm^{2}}.
\]
\end{theorem}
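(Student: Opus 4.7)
For Part 1, I would use the support-function characterization of convex containment. The inclusion $\cE_1(B)\subseteq\cc{S}$ yields, for every unit vector $v$, $\sqrt{v^{\T} B^{-1}v}\le \max_j|v^{\T}\hat{\beta}_j|$, since the support function of $\cE_1(B)$ in direction $v$ is $\sqrt{v^{\T} B^{-1}v}$ while that of $\cc{S}=\mathrm{conv}\{\pm\hat{\beta}_j\}$ is $\max_j|v^{\T}\hat{\beta}_j|$. For $v\perp\cI$ with $\|v\|_2=1$, we have $v^{\T}\beta_j=0$ (as $\beta_j\in\cI$), hence $|v^{\T}\hat{\beta}_j|=|v^{\T}(\hat{\beta}_j-\beta_j)|\le\varrho$, and squaring gives the claim.

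For Part 2, I would combine an operator bound with a perturbation estimate. From $\cc{S}\subseteq\cE_{\sqrt{d}}(B)$ each $\hat{\beta}_j$ satisfies $\hat{\beta}_j^{\T} B\hat{\beta}_j\le d$, so Cauchy-Schwarz in the $B$-inner product gives $(v^{\T}\hat{\beta}_j)^2\le d\,v^{\T} B^{-1}v$ and, averaging with $\mu$, $\sum_j\mu_j\hat{\beta}_j\hat{\beta}_j^{\T}\preceq d\,B^{-1}$ in the L\"owner order. Separately, the elementary inequality $(a+b)^2\ge\tfrac12 a^2-b^2$ applied with $a=v^{\T}\beta_j$, $b=v^{\T}(\hat{\beta}_j-\beta_j)$ and weighted by $\mu$ yields $v^{\T}\big(\sum_j\mu_j\hat{\beta}_j\hat{\beta}_j^{\T}\big)v\ge\tfrac12 v^{\T}\big(\sum_j\mu_j\beta_j\beta_j^{\T}\big)v-\varrho^2$. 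Using the hypothesis $\lambda_m(\sum_j\mu_j\beta_j\beta_j^{\T})\ge\lambda^*$ and Courant-Fischer produces an $m$-dimensional subspace on which the right side is at least $(\lambda^*-2\varrho^2)/2$, so $\lambda_m(\sum_j\mu_j\hat{\beta}_j\hat{\beta}_j^{\T})\ge(\lambda^*-2\varrho^2)/2$. Combining with a sharpened version of the first operator inequality (discussed below) then delivers the stated lower bound on $\lambda_m(B^{-1})$.

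For Part 3, the plan is a Davis-Kahan-style perturbation argument. Applying Courant-Fischer to the $(d-m)$-dimensional subspace $W=\cI^\perp$ together with Part 1 yields $\lambda_{m+1}(B^{-1})\le\varrho^2$; combined with Part 2 this produces a spectral gap. Since $\hat{\Pi}$ and $\Pi^*$ are rank-$m$ orthogonal projectors, $\|\hat{\Pi}-\Pi^*\|_2^2=2\sum_{i=1}^m\|\Pi_W\hat{v}_i\|_2^2$, where $\hat{v}_i$ denotes the $i$-th top eigenvector of $B^{-1}$. Splitting $B^{-1}\hat{v}_i=\lambda_i\hat{v}_i$ along the decomposition $\R^d=\cI\oplus W$ yields on $W$ the resolvent identity $(\lambda_i I-\Pi_W B^{-1}\Pi_W)\,\Pi_W\hat{v}_i=\Pi_W B^{-1}\Pi^*\hat{v}_i$; the left operator is invertible with norm at least $\lambda_m-\varrho^2$, hence $\|\Pi_W\hat{v}_i\|\le\|\Pi_W B^{-1}\Pi^*\|_\mathrm{op}/(\lambda_m-\varrho^2)$. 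Bounding $\|\Pi_W B^{-1}\Pi^*\|_\mathrm{op}$ via Cauchy-Schwarz in the $B^{-1}$-inner product, summing over $i=1,\dots,m$, and substituting the Part 2 lower bound on $\lambda_m(B^{-1})$ then produces the announced inequality.

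The main obstacle, in my view, is extracting the sharp factor $\sqrt{d}$ in Part 2 (and correspondingly $d\sqrt{d}$ in Part 3) rather than the $d$ (respectively $d^2$) delivered by the naive Cauchy-Schwarz used above. The weak route gives only $\sum_j\mu_j\hat{\beta}_j\hat{\beta}_j^{\T}\preceq d\,B^{-1}$; refining this to the $\sqrt{d}$ factor most plausibly exploits the Fritz-John structural properties of the rounding ellipsoid for a centrally symmetric body, for instance via the trace bound $\Tr\big(B^{1/2}\sum_j\mu_j\hat{\beta}_j\hat{\beta}_j^{\T} B^{1/2}\big)\le d$ combined with the behavior of eigenvalues under congruence, rather than a purely operator-theoretic inequality.
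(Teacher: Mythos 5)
Your Parts 1 and 2 follow the paper's own proof essentially step for step. Part 1 is the identical support-function argument. For Part 2 the paper performs exactly your two-sided comparison, merely phrased geometrically: it sets \( S=\tfrac12\bigl[\sum_j\mu_j\beta_j\beta_j^{\T}-2\varrho^{2}\Pi^{*}\bigr]\succeq0 \), uses \( (a+b)^{2}\ge\tfrac12a^{2}-b^{2} \) to show that the flat ellipsoid \( \{x\in\cI:\,x^{\T}S^{+}x\le1\} \) is inscribed in \( \cc{S} \) and hence contained in \( \cE_{\sqrt{d}}(B) \), which is precisely your L\"owner inequality \( S\preceq d\,B^{-1} \). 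Your worry about the constant is justified, but it cuts against the paper as much as against you: read literally, the paper's argument also delivers only \( \lambda_{m}(B^{-1})\ge(\lambda^{*}-2\varrho^{2})/(2d) \), and no Fritz--John contact-point refinement is supplied to upgrade \( 2d \) to \( 2\sqrt{d} \); so you are not missing a step that the paper actually carries out.

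Part 3 is where you genuinely diverge, and where your plan has a concrete gap. The paper's proof is two lines: since \( \hat{\Pi} \) and \( \Pi^{*} \) are rank-\( m \) orthogonal projectors, \( \|\hat{\Pi}-\Pi^{*}\|_{2}^{2}=2\Tr[(I-\Pi^{*})\hat{\Pi}]\le2(d-m)\sup_{v\perp\cI,\,\|v\|_{2}=1}v^{\T}\hat{\Pi}v \), and for any unit \( v \) one has \( v^{\T}\hat{\Pi}v\le v^{\T}B^{-1}v/\lambda_{m}(B^{-1}) \) because \( \hat{\Pi} \) projects onto the top-\( m \) eigenspace of the positive matrix \( B^{-1} \); combining this with Part 1 and Part 2 gives the stated bound immediately. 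Your Davis--Kahan route instead requires controlling the cross block \( \|(I-\Pi^{*})B^{-1}\Pi^{*}\| \), and the only available tool is Cauchy--Schwarz, \( |w^{\T}B^{-1}u|\le(w^{\T}B^{-1}w)^{1/2}(u^{\T}B^{-1}u)^{1/2}\le\varrho\,\lambda_{1}(B^{-1})^{1/2} \) for unit \( w\perp\cI \) and \( u\in\cI \); nothing in the hypotheses bounds \( \lambda_{1}(B^{-1}) \) from above, so your resolvent estimate does not close to the announced constant. The missing idea is the trace identity above, which bounds the off-subspace mass of \( \hat{\Pi} \) using only the restriction of \( B^{-1} \) to \( \cI^{\perp} \) (Part 1) and the single eigenvalue \( \lambda_{m}(B^{-1}) \) (Part 2), with no need for the unavailable upper bound on \( B^{-1} \) over \( \cI \).
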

The proof of the theorem is presented in the appendix.
\\

The results of Theorems \ref{empir1} and \ref{round1} provide a kind of
theoretical justification for the algorithms, presented in the next
section. Indeed, suppose that the test functions \( h_{1},...,h_{L} \) and
the vectors \( \xi_{1},...,\xi_{J} \) are chosen in such a way that there
are at least \( m \) vectors with ''significant'' projection on \( \cI \)
among \( \hat{\beta}_{1},...,\hat{\beta}_{J} \) as in \rf{betaj1}. Then
the projector estimate \( \hat{\Pi} \), computed using the ellipsoid \(
\cE(B) \) which is rounding for the set \( \{\pm\hat{\beta}_{j}\} \), with
high probability will be close to \( \Pi^{*} \).
\\

However, the results about the estimation quality depend critically on the
dimension \( d \). Numerical results also indicate that with growing
dimension, the fraction of non-informative vectors \( \hat{\beta}_{j} \)
increases leading to the situation when some of the longest semi-major
axis of \( \cE_{\sqrt{d}} \) are also non-informative and nearly
orthogonal to \( \cI \). This enforces us to introduce an additional check
of non-normality for the directions suggested by the estimated
ellipsoid \( \cE \).
\\

{\bf Identifying the non-Gaussian subspace by statistical tests:}
Currently the estimation procedure of the vectors \( \beta(\psi_{h,c}) \)
itself does not allow the identification of the semi-axis within the
target space. Hence the basic idea is to apply statistical tests on
normality w.r.t. the significance level \(\alpha \) to the original data
from \( \R^{d} \) projected on every semi-axis of \(\cE_{\sqrt{d}} \). If
the hypothesis of normality is rejected w.r.t. the projected data, the
corresponding semi-axis is used as a basis vector for the reduced target
space \( \cI \).
\\


{\bf Structural adaptation:} At the beginning of the algorithm, we have no
prior information about \( \cI \) and therefore sample the directions \(
\xi_{j} \) and \( \omega_{l} \) randomly from the uniform law. However,
the SNGCA procedure assumes that the obtained estimated structure \(
\hat{\cI} \) delivers some information about \( \cI \) which can be used
for improving the sample mechanism and therefore, the final quality of
estimation. This leads to the \emph{structurally adaptation} iterative
procedure \cite{a3-1}: the step of estimating the vectors \( \{
\hat{\beta}_{j} \}_{j=1}^J \) and the step of estimating subspace \( \cI
\) are iterated, the estimated structural information given by \(
\hat{\cI} \) is used to improve the quality of
estimating the vectors \( \hat{\beta}_{j} \) in the next iteration of SNGCA.
In our implementation, we sample a fraction of directions \( \xi_{j} \)
and \( \omega_{l} \) due to the previously estimated ellipsoid \( \hat{B}
\) and the other part randomly. However the number of the randomly
selected directions remains constant during iteration. In the next section
we present the formal description of SNGCA.

\section{Algorithms}\label{SAlgorithms}

This section describes the principal steps of the procedure. The detailed
description is given in the Appendix.

\subsection{Normalization}

As a preprocessing step the SNGCA procedure uses a
componentwise normalization of the data. Let
\(\sigma=(\sigma_{1},\ldots\sigma_{d}) \) be the standard deviations of the data
components of \( x_{1},\ldots,x_{d} \). For \( i=1,\ldots,N\) the componentwise
normalization of the data is done by \(Y_{i} = \diag(\sigma^{-1}) X_{i}\).

\subsection{Estimation of the vectors from non-Gaussian subspace:}
Let \( \{\omega_{jl}\} \), \( l=1,\ldots,L \), and \( \{\xi_{j}\} \), \(
j=1,\ldots,J \) be two collections of unit vectors called
the measurement directions. Define for all \( j=1,\ldots,J \) and \( l \le
L \), the functions \( h_{jl}(x) = f(x,\omega_{jl}) \), and compute the
vectors \( \hat{\gamma}_{jl} \) and \( \hat{\eta}_{jl} \) due to
(\ref{hathat}). Next, for every \( j \le J \), compute the vector \(
\hat{c}_{j} \) by solving the problem \rf{chat} with \( \xi = \xi_{j} \)
leading to the vector \( \hat{\beta}_{j} \) by \rf{betaj1}.

\subsection{Computing the estimator \( \hat{\Pi} \) of the projector  \( \Pi^{*} \)}
The projector \( \hat{\Pi} \) is constructed on the base of the first \( m
\) principal eigenvectors of the rounding ellipsoid \( \cE \) for the set
\( \cc{S} \) spanned by the vectors \( \pm \hat{\beta}_{j} \), \(
j=1,\ldots,J \). To build the ellipsoid \( \cE \) we use the algorithm in
\cite{Nestreov04} which in fact computes the minimum volume ellipsoid
(MVEE) which covers \( \cc{S} \). For convenience we provide the algorithm
in the appendix.

\subsection{Building the subspace \( \hat{\cE} \) using statistical tests}
In order to construct the projector \( \hat{\Pi} \) the identification of
the \( m \) principal eigenvectors of \( \cE \) that approximate \(\cI\)
is required. In projecting the data onto the semi-axis of \( \cE \) and
testing the projected data on normality the projective approach from the
estimation step is repeated.
\\

Since statistical tests specialized for a certain deviation from the
normal distribution, are more powerful, we use different tests inside of
SNGCA in order to cope with different deviations from normality of the
projected data. To be more precise we use the \( K^{2} \)-test according
to D'Agostino-Pearson \cite{Zar99} to identify a significant asymmetry in
the projected distribution and the EDF-test according to Anderson-Darling
\cite{AnscombeGlynn83} with the modification of Stephens
\cite{Stephens86}, which is sensitive to the tails of the projected
distribution. In order to confirm these test results from above we use the
Shapiro-Wilks test \cite{ShapiroWilks} based on a regression strategy in
the version given by Royston \cite{Royston82a,Royston82c}. Once we have
classified the semi-axis of \(\cE_{\sqrt{d}} \) as being close to the
target space we can use the identified subset of axis in the structural
adaptation step.

\subsection{Structural Adaptation}
The first step of the algorithm assumes that the measurement directions
\( \omega_{jl} \) and \( \xi_{j} \) are drawn randomly from the unit sphere in
\( \R^{d} \).
At each further step of the algorithm
we can use the result of the previous iterations of SNGCA in
order to accumulate information about \( \cI \) in a sequence \(
\hat{\cI}_{1},\hat{\cI}_{2},\ldots \) of estimators of the target space.
This information is used to draw a fraction of the measurement directions from the
estimated subspaces and the other part of such direction is selected randomly.
The procedure is described in detail in algorithm \ref{alg:adaptation}.

\subsection{The stopping criterion}
Suppose that \( \cI \) is a priori given. Then the convergence of SNGCA
can be measured according to the criterion (\ref{error-criterion}). More
precisely we assume convergence if the improvement of the error measured
by (\ref{error-criterion}) from one iteration to the next one is less than
\( \delta \) percent of the error in the former iteration. To this end the
maximum angle \( \theta \) between the subspaces specified by the matrix
of eigenvectors \( V^{(k)}=\big
[\hat{v}_{1}^{(k)},\hat{v}_{2}^{(k)},\ldots \big ] \) and \(
V^{(k+1)}=\big [\hat{v}_{1}^{(k+1)},\hat{v}_{2}^{(k+1)},\ldots \big ] \)
given by
\begin{eqnarray*}
      \cos(\theta)=\max_{x,y}\frac{ | x^{\T} V^{(k)^{\T}} V^{(k+1)}y| }{ \|V^{(k)}x\|_{2} \;\|V^{(k+1)}y\|_{2}}
\end{eqnarray*}
is computed. In the next section we demonstrate the improvement of the
estimation error between subsequent iterations of SNGCA.

\section{Numerical results}\label{numerics}

The aim of this section is to compare SNGCA with other statistical methods
of dimension reduction. The reported results from Projection Pursuit (PP)
and NGCA were already published in \cite{a3-3}.

\subsection{Synthetic Data}\label{synthetic}

Each of the following test data sets includes \( 1000 \) samples in \( 10
\) dimension and each sample consists of \( 8 \)-dimensional independent,
standard and homogeneous Gaussian distributions. The other \( 2 \)
components of each sample are non-Gaussian with variance unity. The
densities of the non-Gaussian components are chosen as follows:
\begin{itemize}
\item[(A)] Gaussian mixture: \( 2 \)-dimensional independent Gaussian mixtures with density of each component given by
\( 0.5\;\phi_{-3,1}(x)+0.5\;\phi_{3,1}(x) \).
\item[(B)] Dependent super-Gaussian: \( 2 \)-dimensional isotropic distribution with density proportional to \( \exp(-\|x\|) \).
\item[(C)] Dependent sub-Gaussian: \( 2 \)-dimensional isotropic uniform with constant positive density for \( \|x\|_{2}\le 1 \) and \( 0 \) otherwise.
\item[(D)] Dependent super- and sub-Gaussian: \( 1 \)-dimensional Laplacian with density proportional to \( \exp(-|x_{Lap}|) \) and \( 1 \)-dimensional
dependent uniform \(  \cU(c,c+1)  \), where \( c=0 \) for \( |x_{Lap}|\le
\log(2) \) and \( c=-1 \) otherwise.
\item[(E)] Dependent sub-Gaussian:\( 2 \)-dimensional isotropic Cauchy distribution with density proportional
to \( \lambda(\lambda^{2}-x^{2})^{-1} \) where \( \lambda=1 \).
\end{itemize}
That means, that the non-normal distributed data are located in a linear subspace. \\

In the sequel we compare SNGCA with PP and NGCA using the test data sets
from above and the estimation error defined in (\ref{error-criterion}).
Each simulation is repeated \( 100 \) times. All simulations are done with
the hyperbolic tangent index as in \rf{tanh}. Since the speed of
convergence varies with the type of non-Gaussian components we use the
maximum number \( maxIter=3\log(d) \) of allowed iterations to stop SNGCA.
In the experiments the error measure \( \epsilon(\cI,\hat{\cI}) \) is used
only to determine the final estimation error. All simulations other than
whose w.r.t. model (C) are computed with a componentwise
pre-normalization.

\begin{figure}[H]
  \centering
  \begin{tabular}{@{} cc@{}}
    \includegraphics[width=0.27\textwidth]{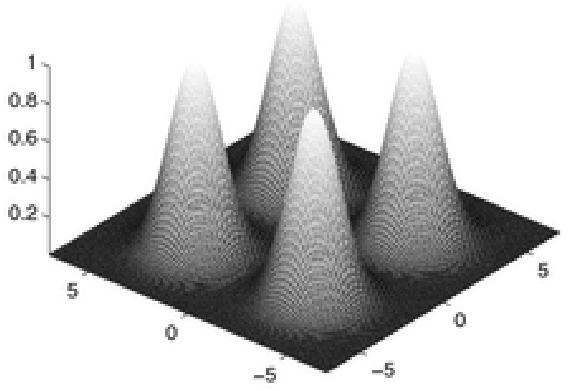} &
    \includegraphics[width=0.27\textwidth]{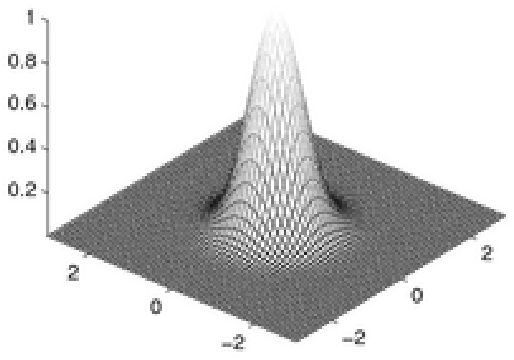}\\
        (A)&(B)\\
    \includegraphics[width=0.27\textwidth]{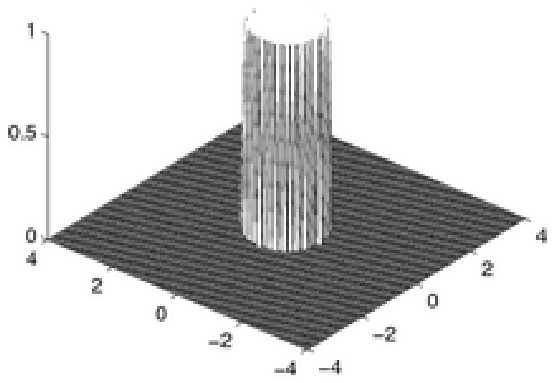} &
    \includegraphics[width=0.27\textwidth]{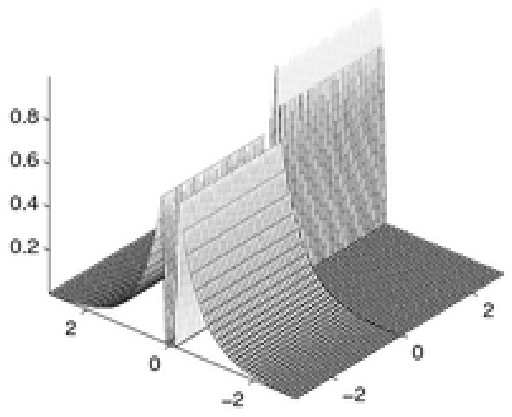}\\
    (C)&(D)\\
    \includegraphics[width=0.27\textwidth]{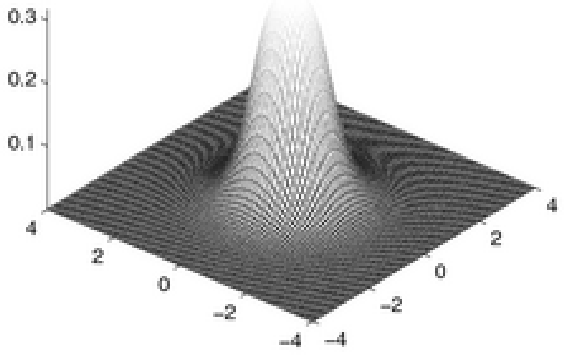} &\\
    (E)& \\
  \end{tabular}
  \caption{densities of the non-Gaussian components: (A) \( 2 \)d independent Gaussian mixtures, (B) \( 2 \)d isotropic
    super-Gaussian, (C) \( 2 \)d isotropic uniform and (D) dependent \( 1 \)d Laplacian with additive \( 1
    \)d uniform, (E) \( 2 \)d isotropic sub-Gaussian  \label{fig:density}
  }
\end{figure}

Figure \ref{fig:density} illustrates the densities of the non-Gaussian
components of the test data. For all numerical experiments reported in
this article the dimension of the target space \( \cI \) is a priori given
as a tuning parameter for the algorithm.\\

Since the optimizer used in PP tends to trap in a local minima in each of
the 100 simulations, PP is 10 times restarted with random starting points.
The best result w.r.t. (\ref{error-criterion}) is reported as the result
of each PP-simulation. In all PP-simulations the number of non-Gaussian
dimensions is a priori given. In the next figure \ref{comp_plots} we
present boxplots of the error (\ref{error-criterion}) obtained from the
methods PP, NGCA and SNGCA.

\begin{figure}[H]
\begin{center}
  \begin{tabular}{@{} cc@{}}
    \includegraphics[width=0.45\textwidth]{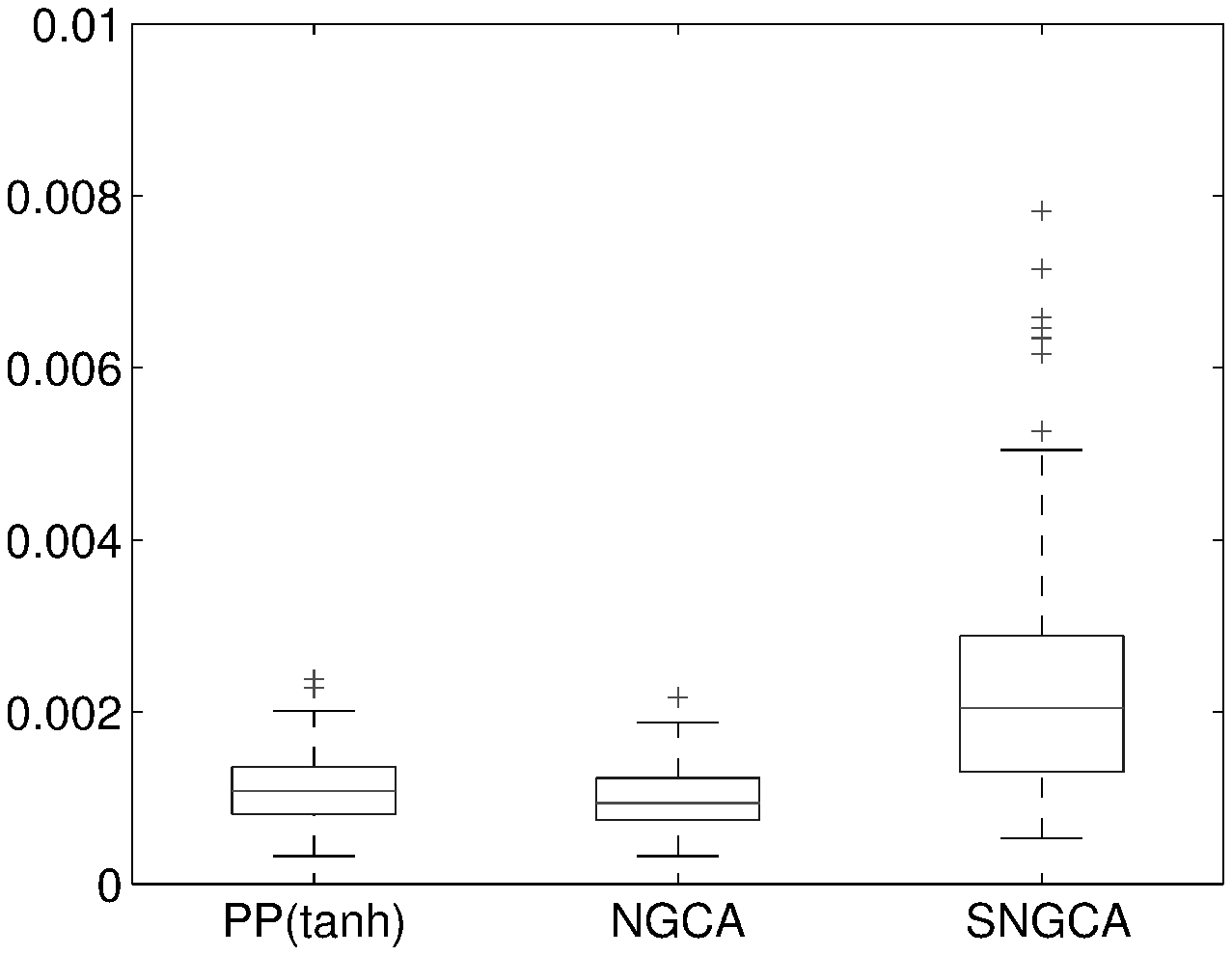}&
    \includegraphics[width=0.45\textwidth]{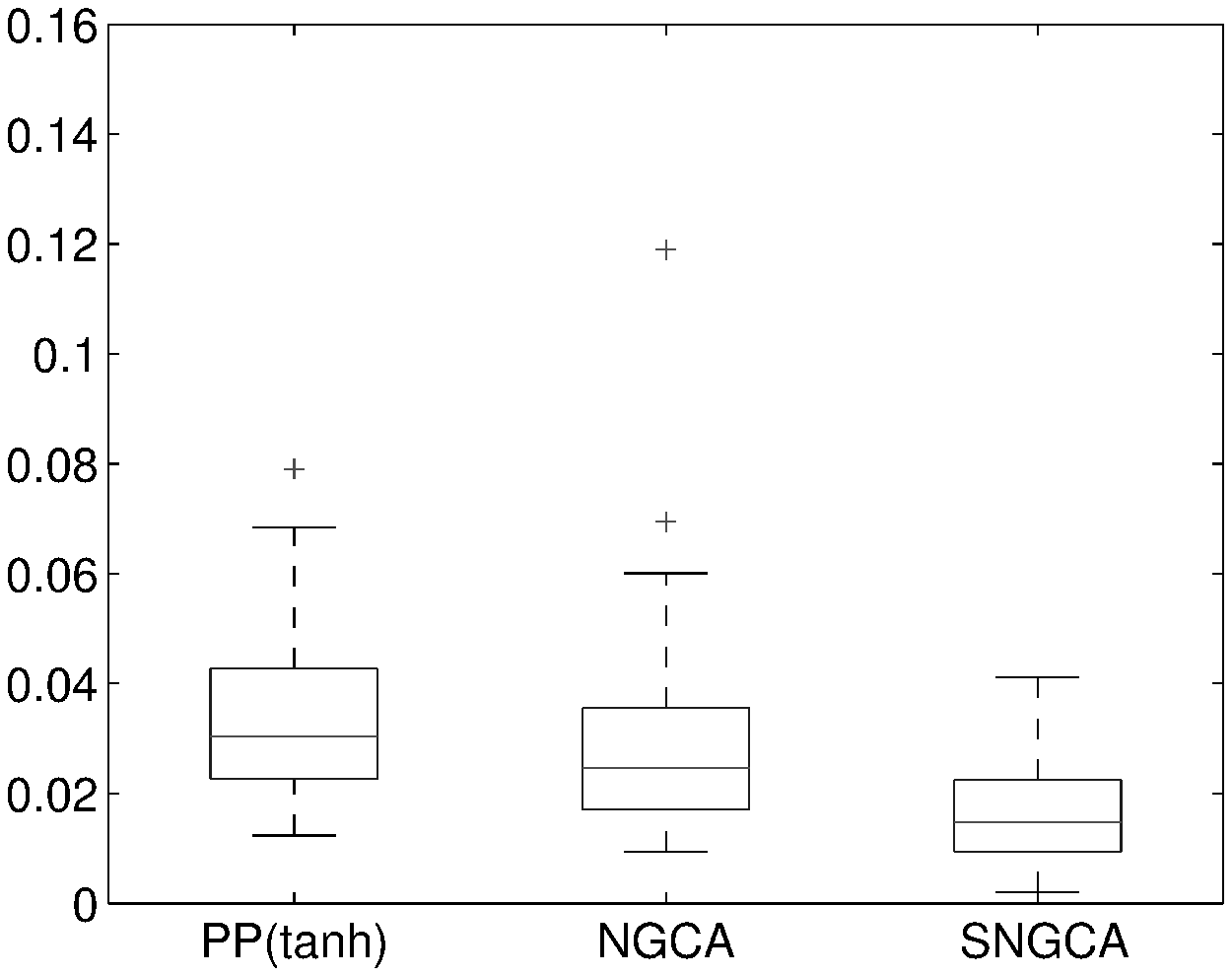}\\
        (A)&(B)\\
        \includegraphics[width=0.45\textwidth]{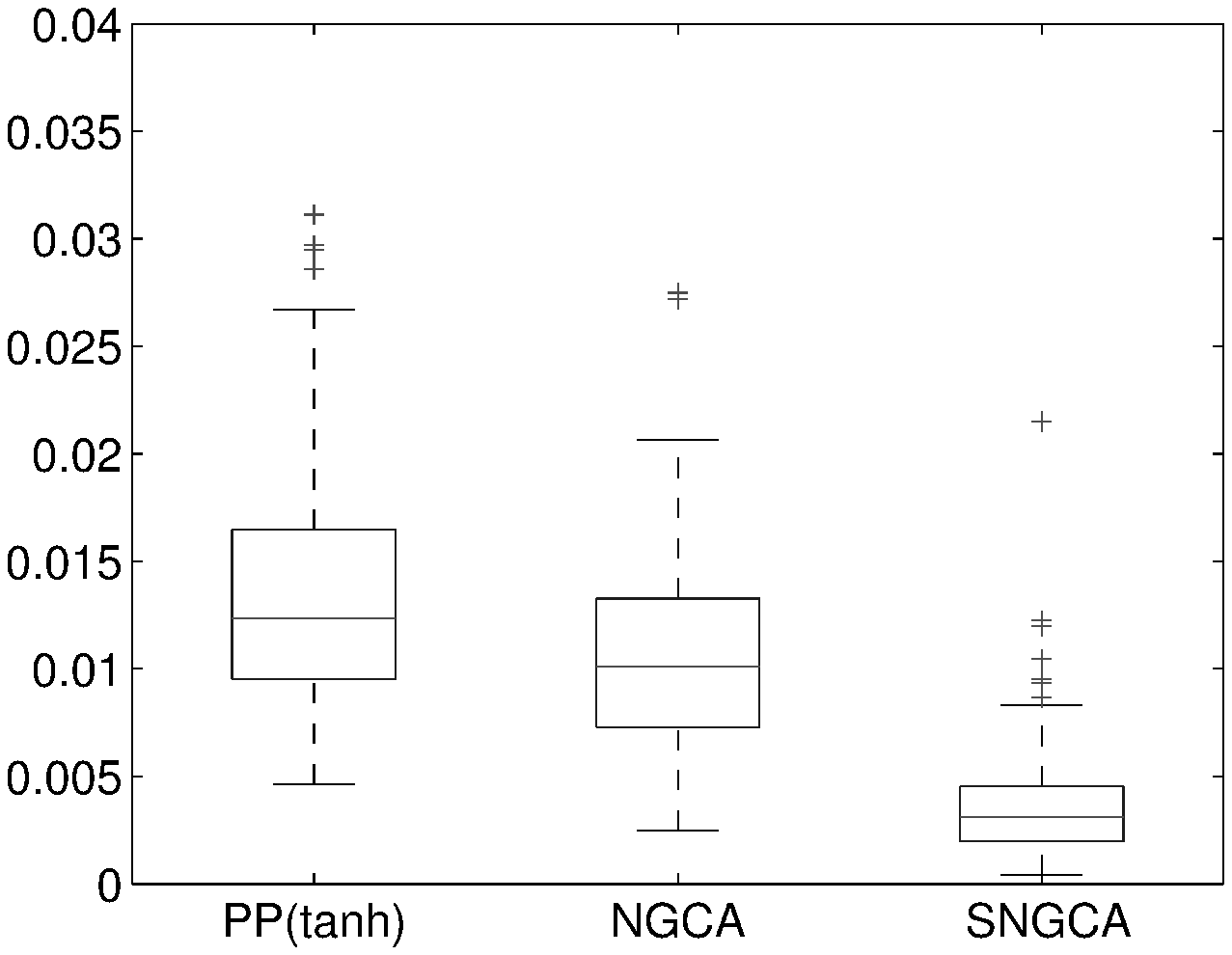}&
        \includegraphics[width=0.45\textwidth]{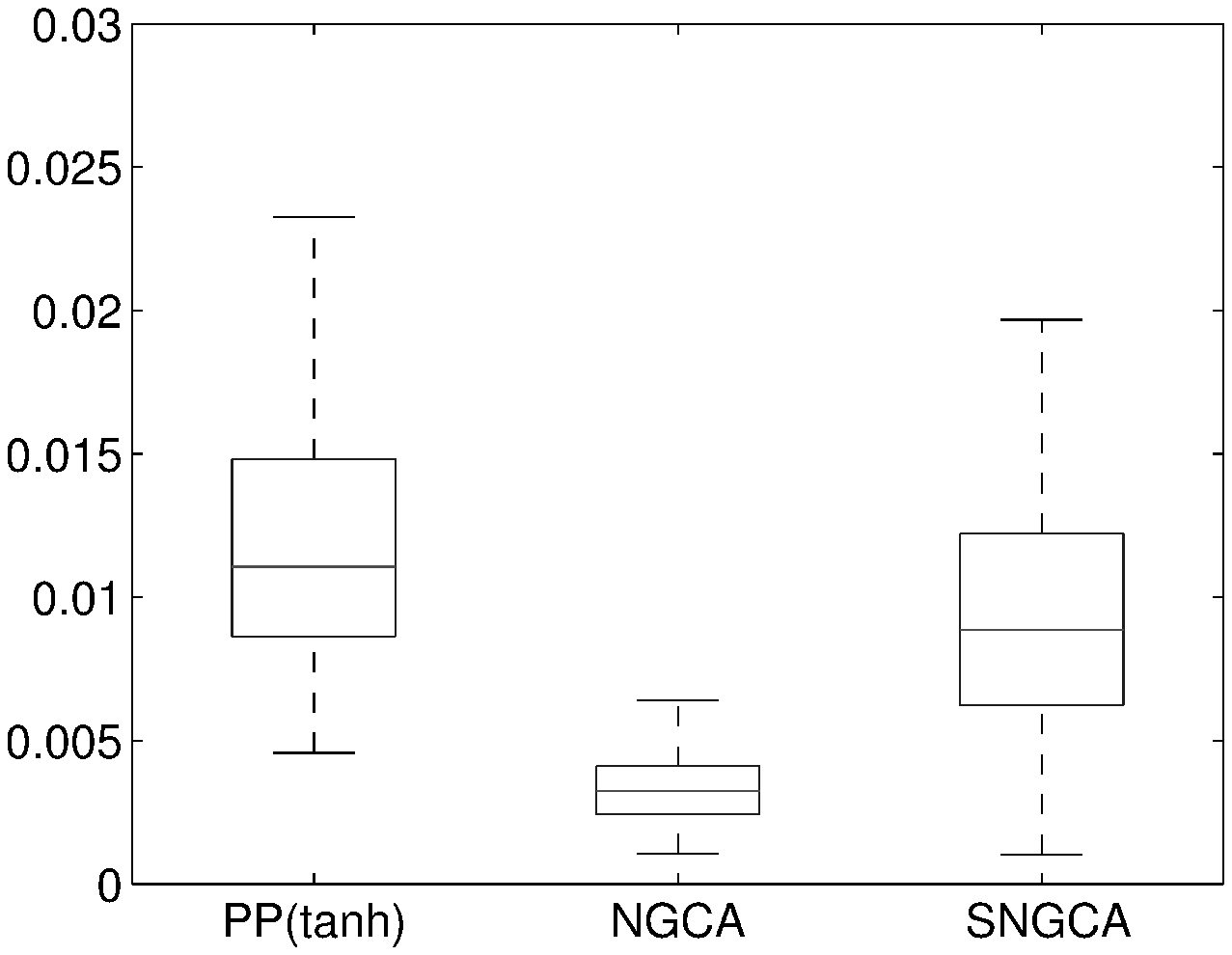}\\
    (C)&(D)\\
        \includegraphics[width=0.45\textwidth]{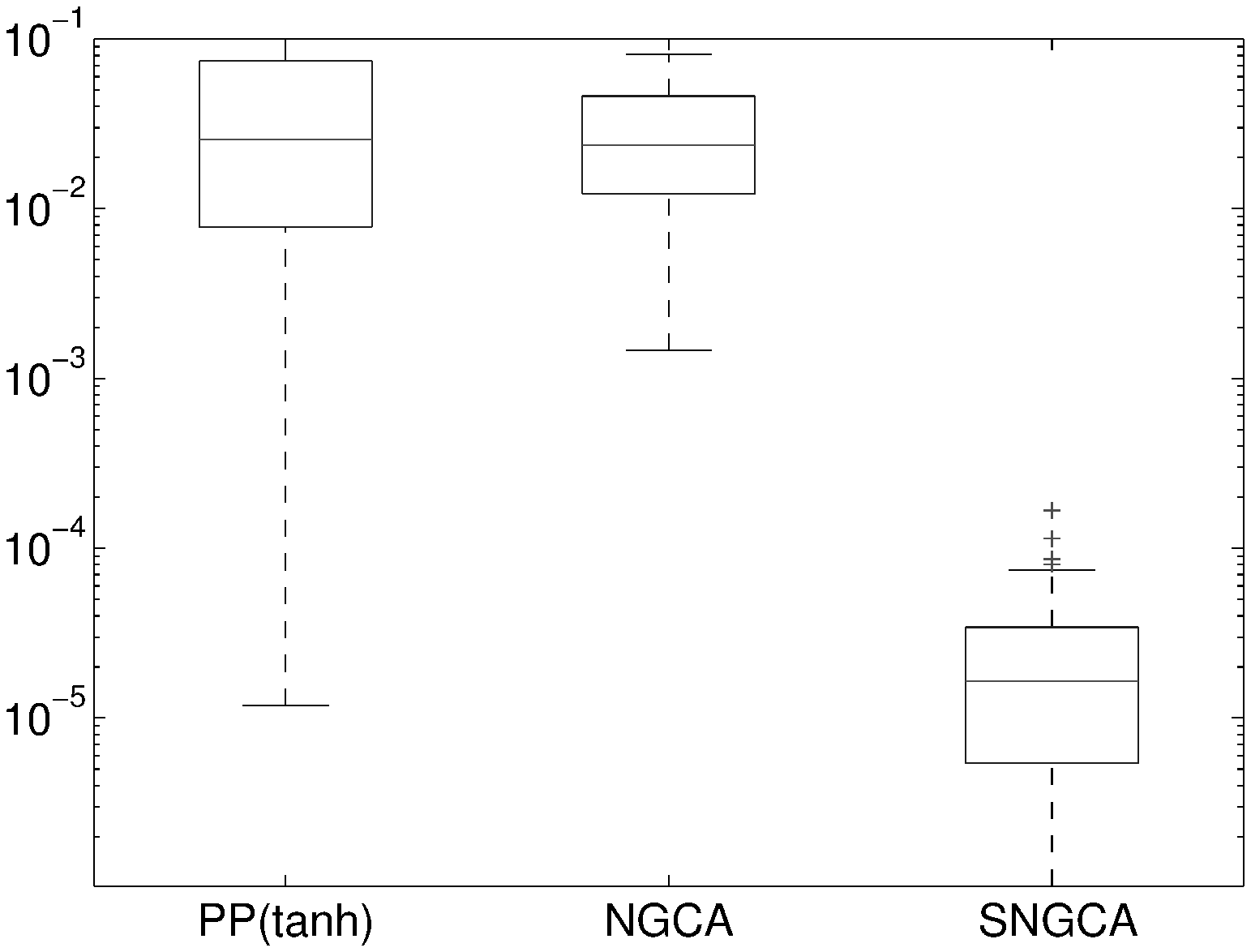}& \\
    (E)& \\
  \end{tabular}
\caption{performance comparison in \( 10 \) dimensions of PP and NGCA versus SNGCA (wrt.
the error criterion \( \cE(\widehat{\cI},\cI) \) ) using the index \( tanh(x) \). The
doted line denotes the mean, the solid lines the variance of (\ref{error-criterion}).
\label{comp_plots}}
\end{center}
\end{figure}
Concerning the results of SNGCA on the data sets (A) and (D) we observe a
slightly inferior performance compared to NGCA. In case of model (A) this
is due to the fact that most of the data projections have almost a
Gaussian density. Consequently the decrease of the estimation error is
slow with increasing number of iterations. In case of the model (D) the
higher variance of the results indicate that the initial sampling of the
data sets gives a poor result. Consequently more iterations are needed to
get an estimation error that is comparable to the result of NGCA. In order
to illustrate this interpretation we report in table (\ref{tab:progress})
the progress of SNGCA w.r.t. estimation error
\(\varepsilon(\cI,\hat{\cI}) \) in each iteration for every test model.\\

\begin{table}[h]
\begin{center}
\begin{tabular}{@{} cc@{}}
\begin{tabular}{|c|c|c|} \hline
  \( j \) & \( \mu_{\epsilon} \) & \( \sigma^2_{\epsilon} \) \\ \hline
  1 & 0.232504 & 0.045787 \\
  2 & 0.163022 & 0.072263 \\
  3 & 0.066537 & 0.032436 \\
  4 & 0.009380 & 0.021975 \\
  5 & 0.002359 & 0.000853 \\ \hline
\end{tabular} &
\begin{tabular}{|c|c|c|} \hline
  \( j \) & \( \mu_{\epsilon} \) & \( \sigma^2_{\epsilon} \) \\ \hline
  1 & 0.30350 &  0.175313 \\
  2 & 0.144430 & 0.057856 \\
  3 & 0.088142 & 0.015168 \\
  4 & 0.041420 & 0.008197 \\
  5 & 0.026436 & 0.000917 \\ \hline
\end{tabular} \\
(A) & (B) \\[2ex]
\begin{tabular}{|c|c|c|} \hline
  \( j \) & \( \mu_{\epsilon} \) & \( \sigma^2_{\epsilon} \) \\ \hline
  1 & 0.040556 &  0.004215\\
  2 & 0.016012 &  0.002441\\
  3 & 0.012427 &  0.001105\\
  4 & 0.008874 &  0.000169\\
  5 & 0.003770 &  0.000125\\ \hline
\end{tabular}  &
\begin{tabular}{|c|c|c|} \hline
  \( j \) & \( \mu_{\epsilon} \) & \( \sigma^2_{\epsilon} \) \\ \hline
  1 & 0.203419 & 0.044672 \\
  2 & 0.023023 & 0.000314 \\
  3 & 0.019960 & 0.000211 \\
  4 & 0.012709 & 0.000197 \\
  5 & 0.009343 & 0.000127 \\ \hline
\end{tabular}  \\
(C) & (D) \\ [2ex]
\begin{tabular}{|c|c|c|} \hline
  \( j \) & \( \mu_{\epsilon} \) & \( \sigma^2_{\epsilon} \) \\ \hline
  1 & 0.2762e-3 & 0.1371e-6 \\
  2 & 0.0450e-3 & 0.0031e-6 \\
  3 & 0.0416e-3 & 0.0033e-6 \\
  4 & 0.0360e-3 & 0.0014e-6 \\
  5 & 0.0287e-3 & 0.0024e-6 \\ \hline
\end{tabular}  &   \\
(E) &  \\
\end{tabular}
\caption{Progress of SNGCA for test models in \( 10 \) dimensions with increasing number
\( j \) of iterations. The empirical mean of the error \( \cE(\widehat{\cI},\cI) \)
defined in (\ref{error-criterion}) is denoted by \( \mu_{\epsilon} \) and \(
\sigma^2_{\epsilon} \) is its empirical variance.\label{tab:progress}}
\end{center}
\end{table}

{\bf Illustration of one-step-improvement:} We shall now illustrate the
iterative gain of information about the EDR space. To this end we use the
projection of \( \hat{\beta}_{j} \) to the EDR-space in order to
demonstrate, how the algorithm works. Figure \ref{fig:adaptation} shows that \( dist(\hat{\beta},\hat{\cI}) \)
decreases with increasing number of iterations. We observe,
that estimators \( \hat{\beta} \) with higher norm tend to be close to \(
\cI \). Nevertheless, this can not be assured for much higher dimensions.
Moreover the improvement in each iteration depends on
the size of the sampling of measurement directions. 

\begin{figure}[H]
    \centering
    \includegraphics[width=0.56\textwidth]{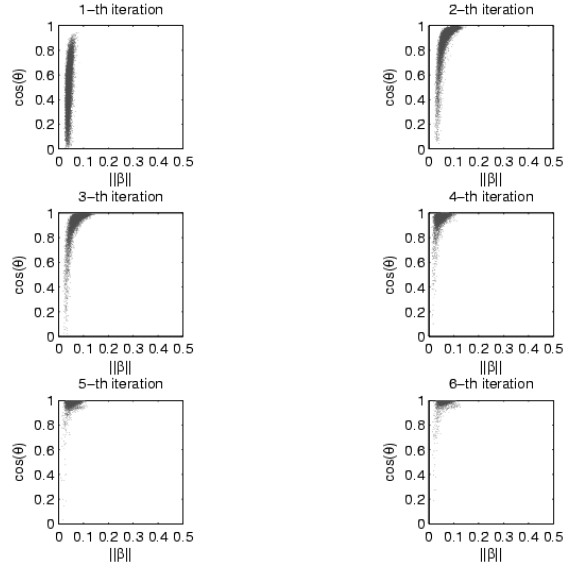}
    \caption{illustrative plots of SNGCA applied to toy 20 dimensional data of type {\bf(C)}
    (see section \ref{numerics}): We show \( \| \hat{\beta}\|  \) vs. \(
    \cos(\theta(\hat{\beta},\cI)) \) for different iterations of the algorithm where \(
    \cI \) is the a priori known EDR-space.} \label{fig:adaptation}
\end{figure}

\begin{figure}[H]
\begin{center}
  \begin{tabular}{@{} cc@{}}
    \includegraphics[width=0.3\textwidth]{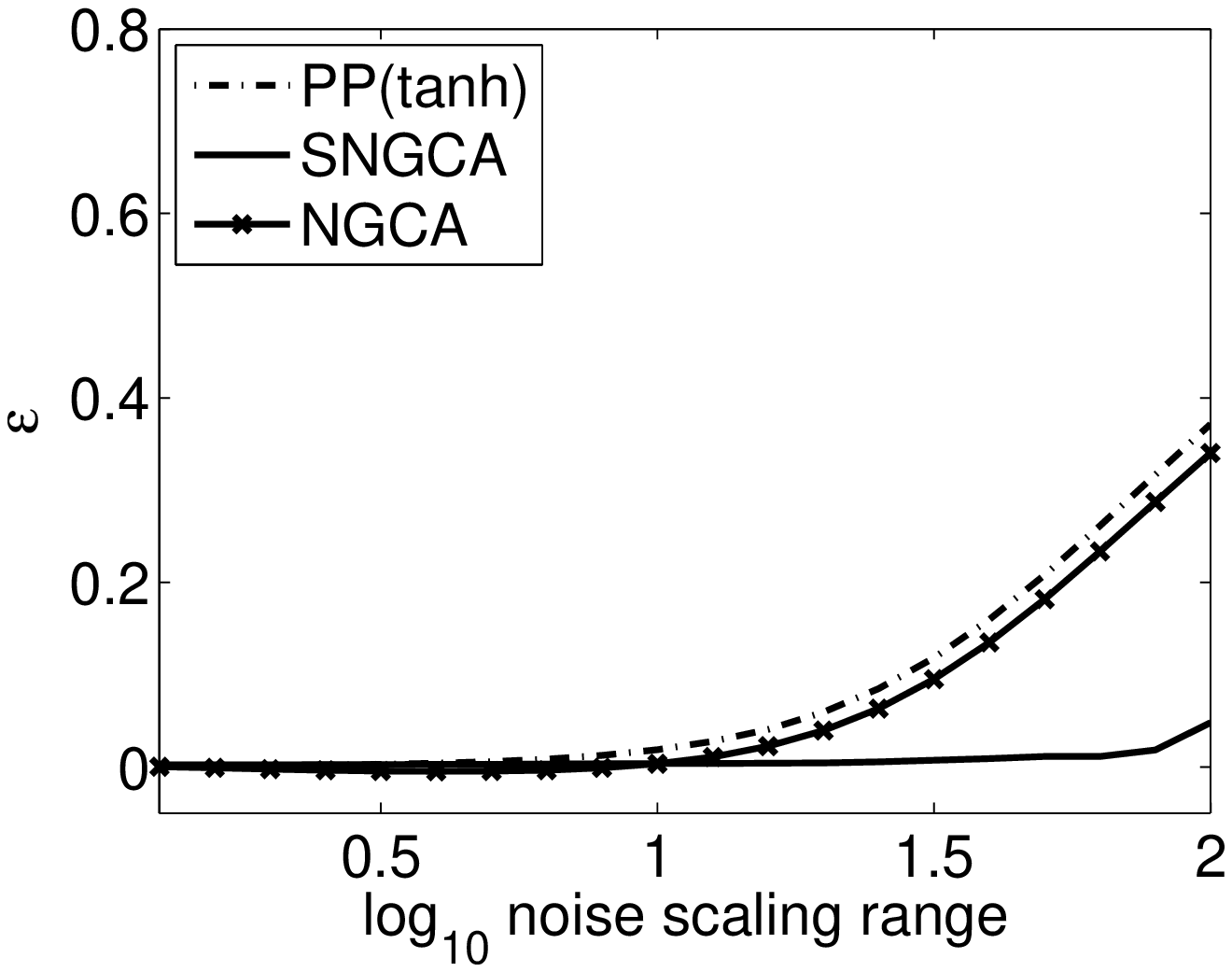}&
    \includegraphics[width=0.3\textwidth]{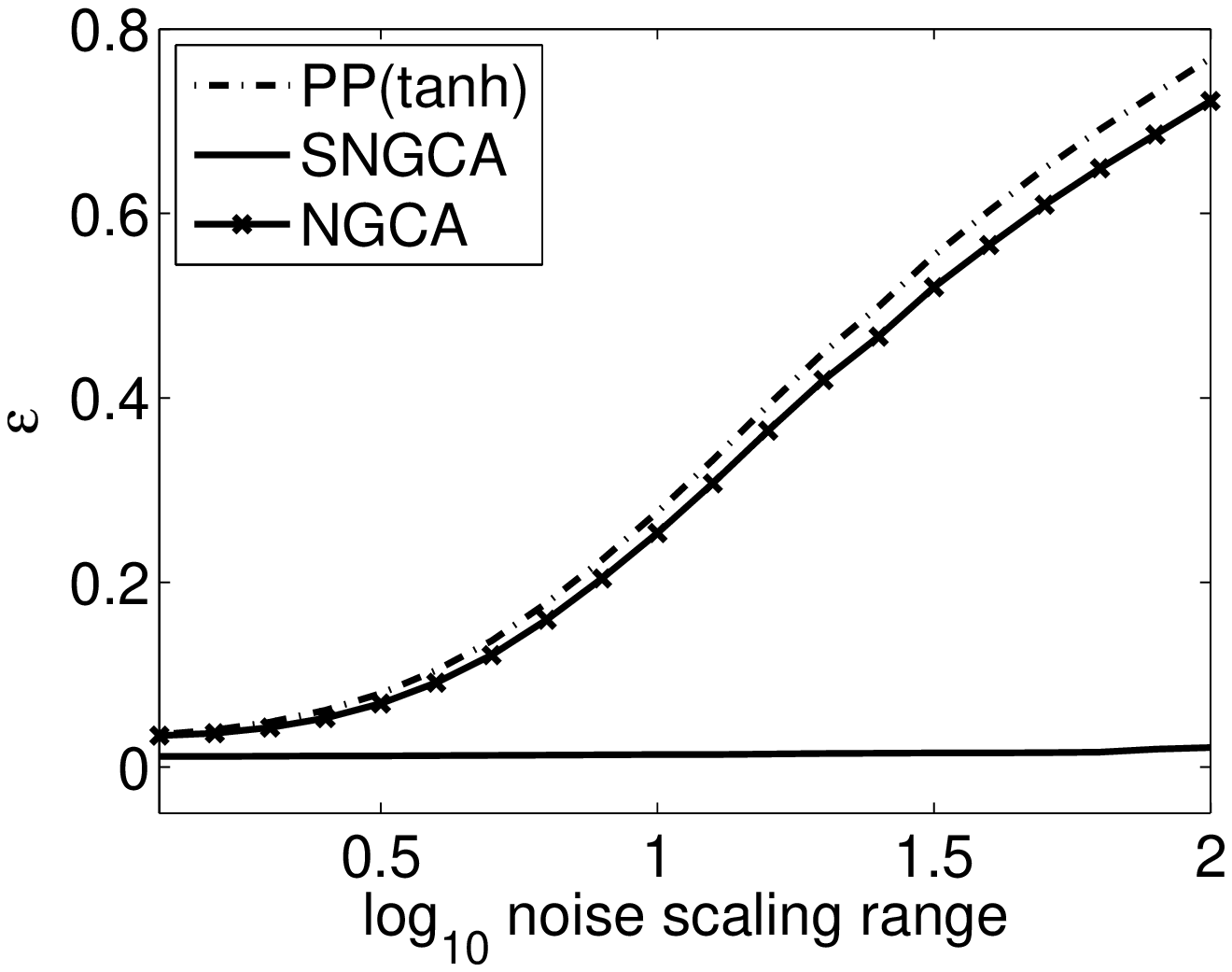}\\
        (A)&(B)\\
        \includegraphics[width=0.3\textwidth]{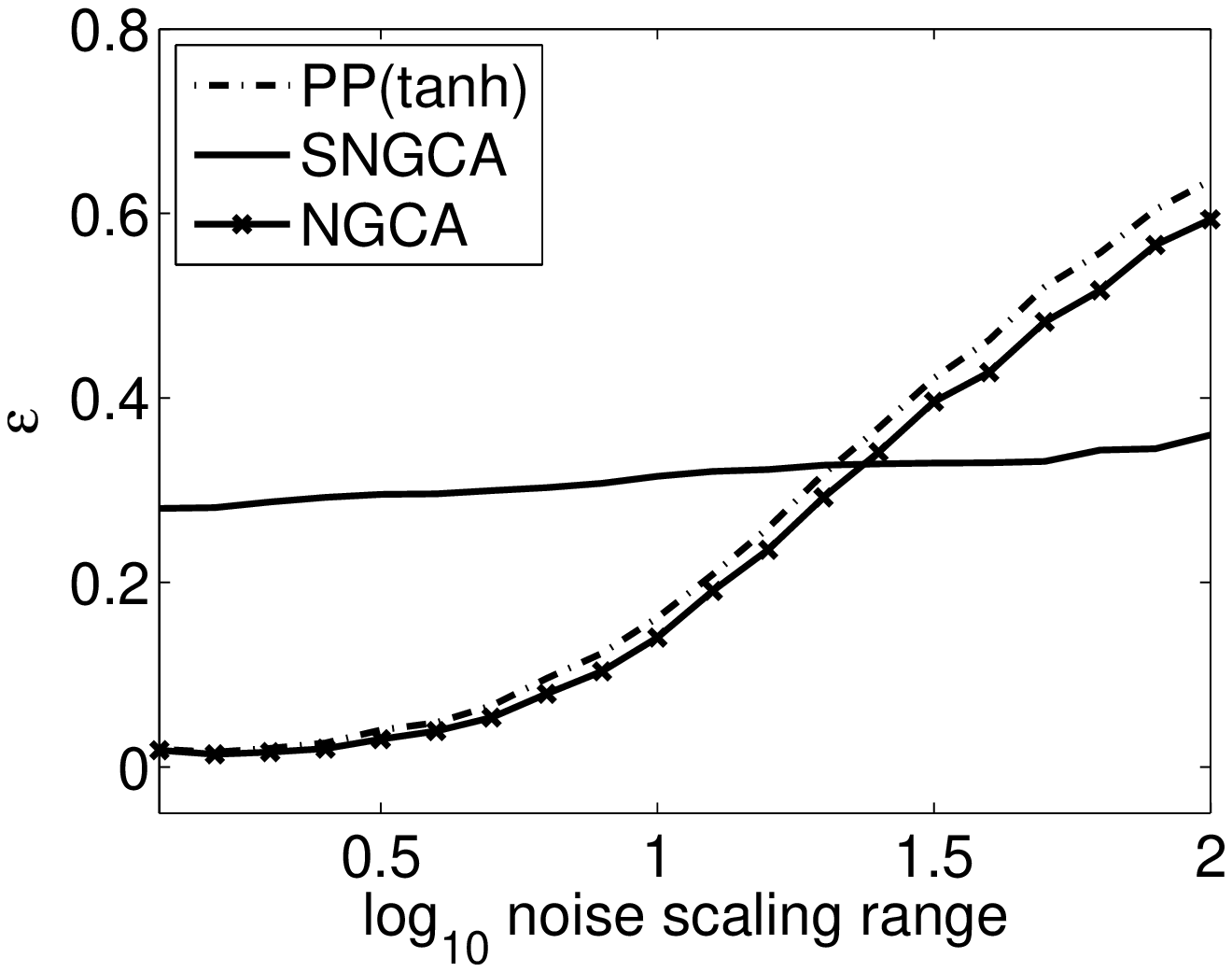}&
        \includegraphics[width=0.3\textwidth]{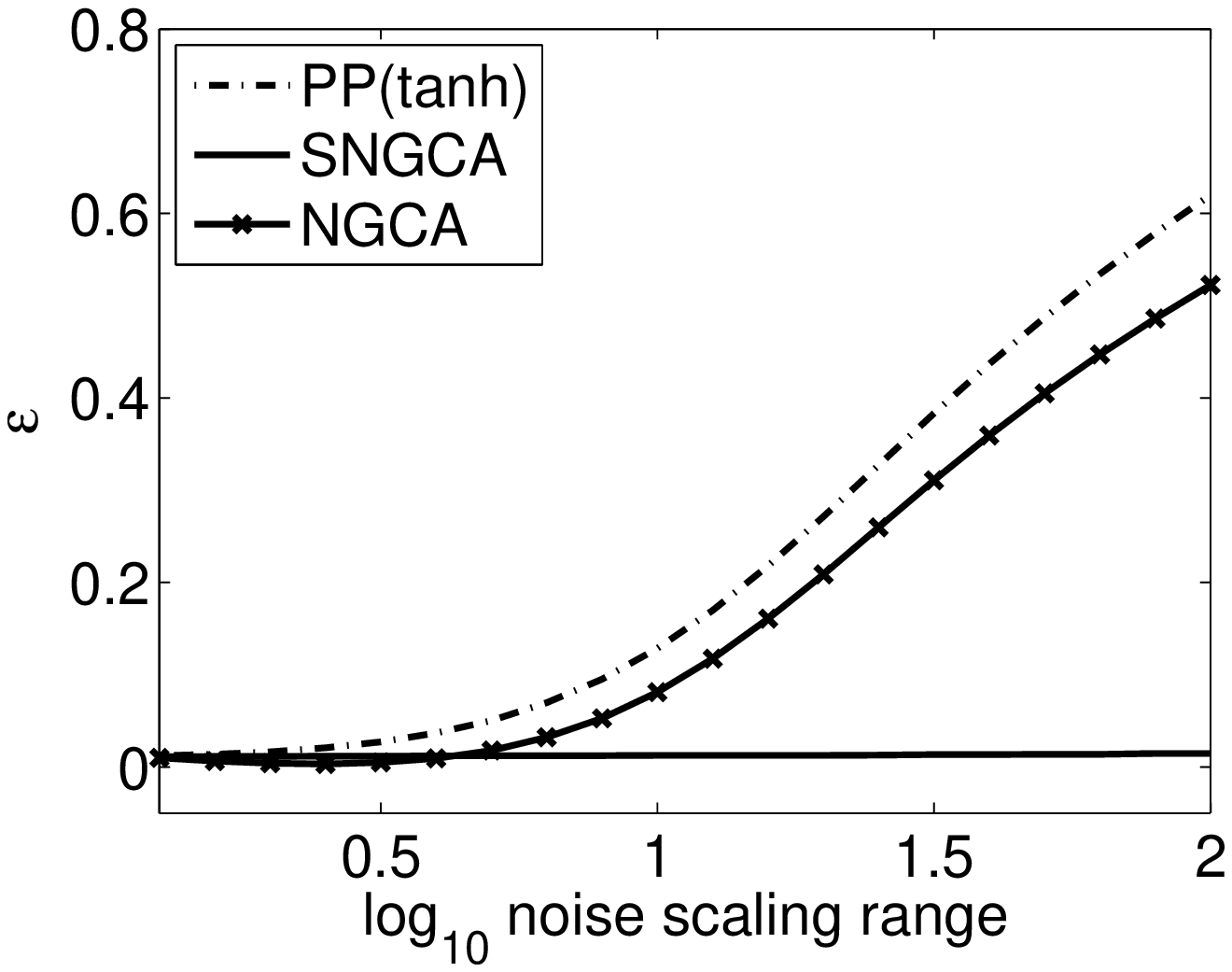}\\
    (C)&(D)\\
        \includegraphics[width=0.3\textwidth]{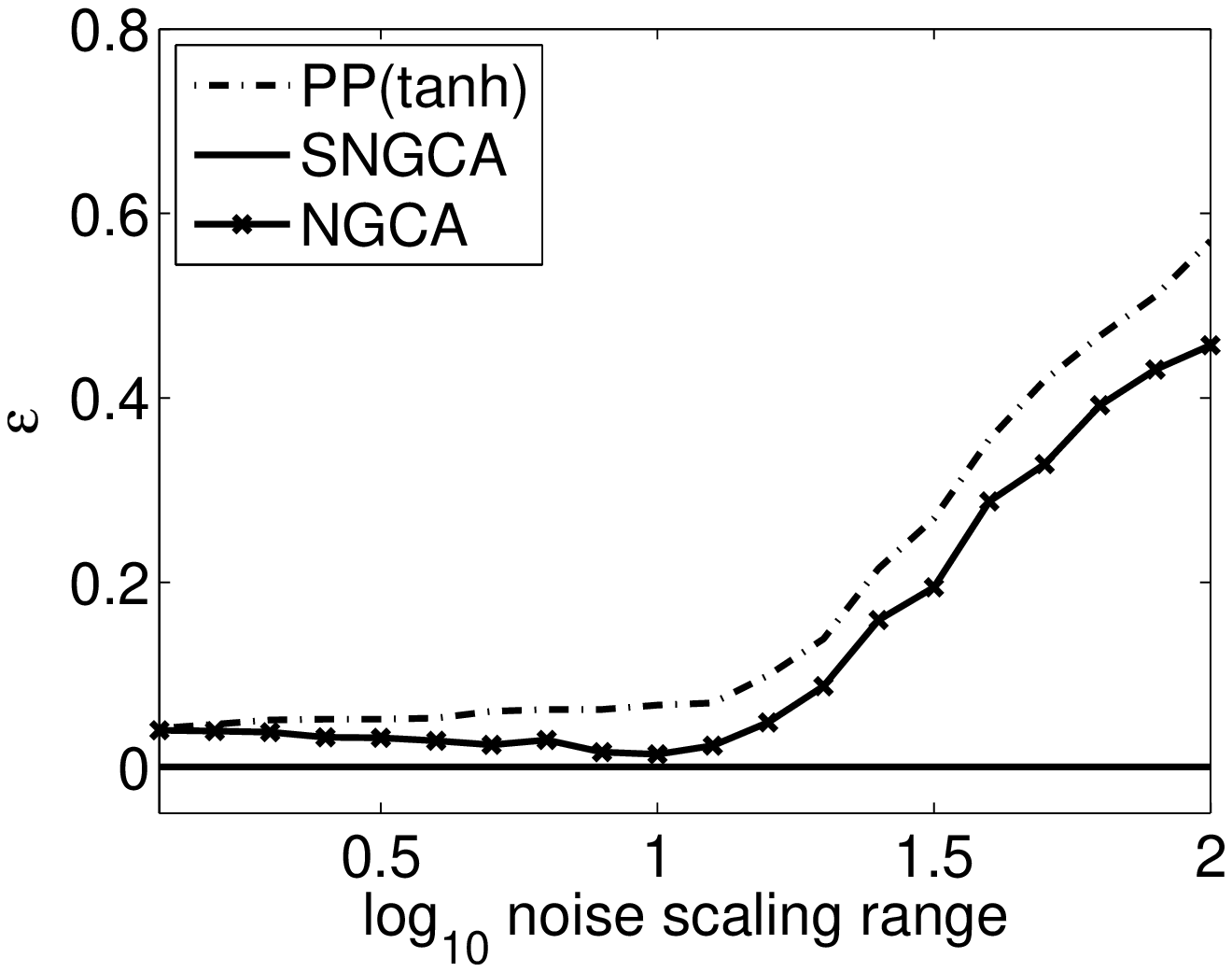}& \\
    (E)& \\
  \end{tabular}
    \caption{results wrt. \( \cE(\widehat{\cI},\cI) \) with deviations of Gaussian components
    following a geometrical progression on \( [10^{-r},10^r] \) where \( r \) is the parameter
    on the abscissa) \label{fig:noise}.}
\end{center}
\end{figure}
Now let us switch to the question of robustness of the estimation
procedure with respect to a bad conditioning of the covariance matrix \(
\Sigma \) of the data. In figure \ref{fig:noise} we consider the same test
data sets as above. The non-Gaussian coordinates always have unity
variance, but the standard deviation of the \( 8 \) Gaussian dimensions
now follows the geometrical progression \(
10^{-r},10^{-r+2r/7},\ldots,10^r \) where \( r=1,\ldots,8 \). Again we
apply a componentwise normalization procedure to the data from the models
(A), (B), (D), (E). We observe that the condition of the
covariance matrix heavily influences the estimation error for the methods
NGCA and PP(tanh). In comparison SNGCA is independent of differences in
the noise variance along different directions in most cases. Only the
detection of the uniform distribution by SNGCA is influenced by the
condition of \( \Sigma \).\\
\begin{figure}[H]
\begin{center}
  \begin{tabular}{@{} cc@{}}
    \includegraphics[width=0.4\textwidth]{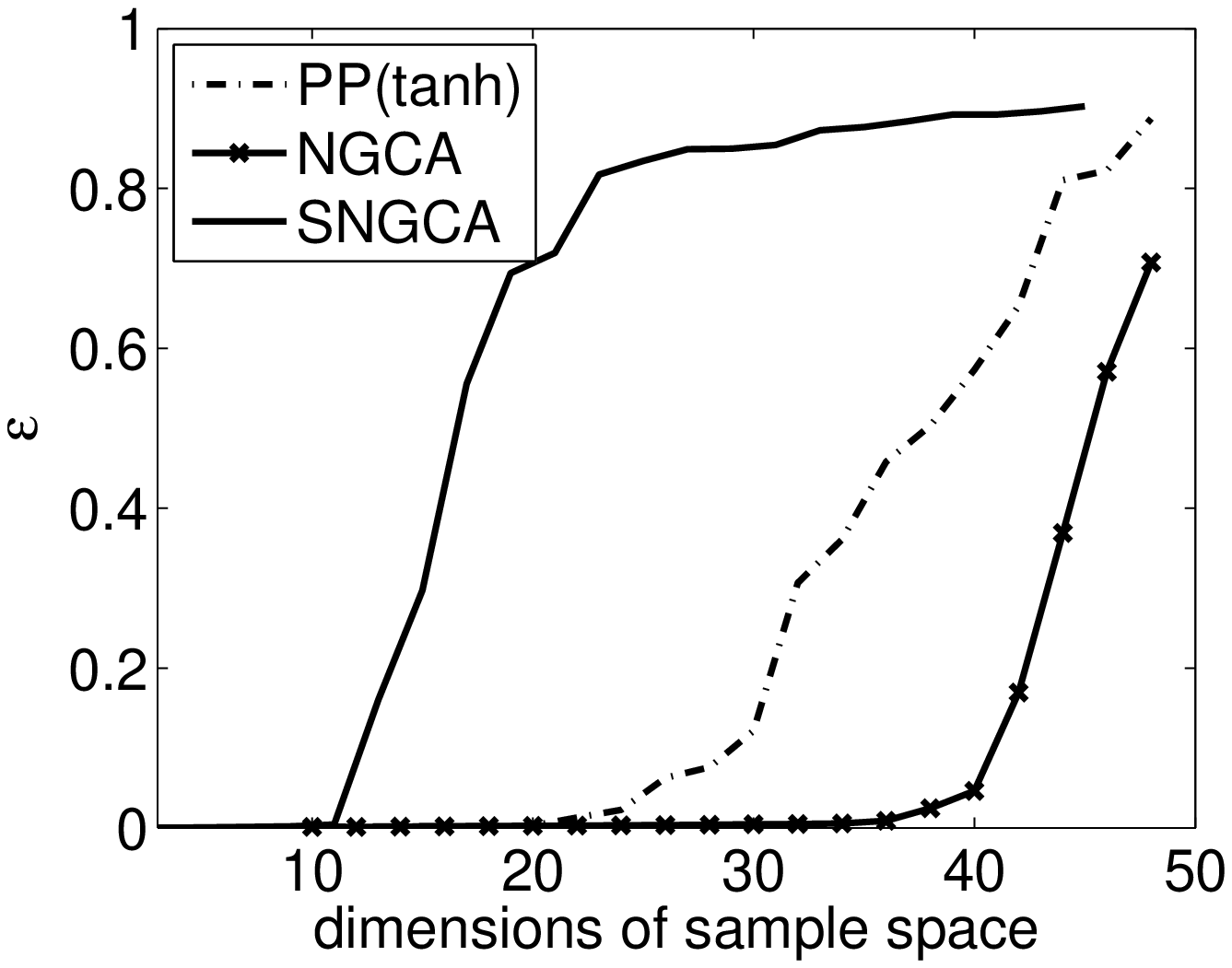}&
    \includegraphics[width=0.4\textwidth]{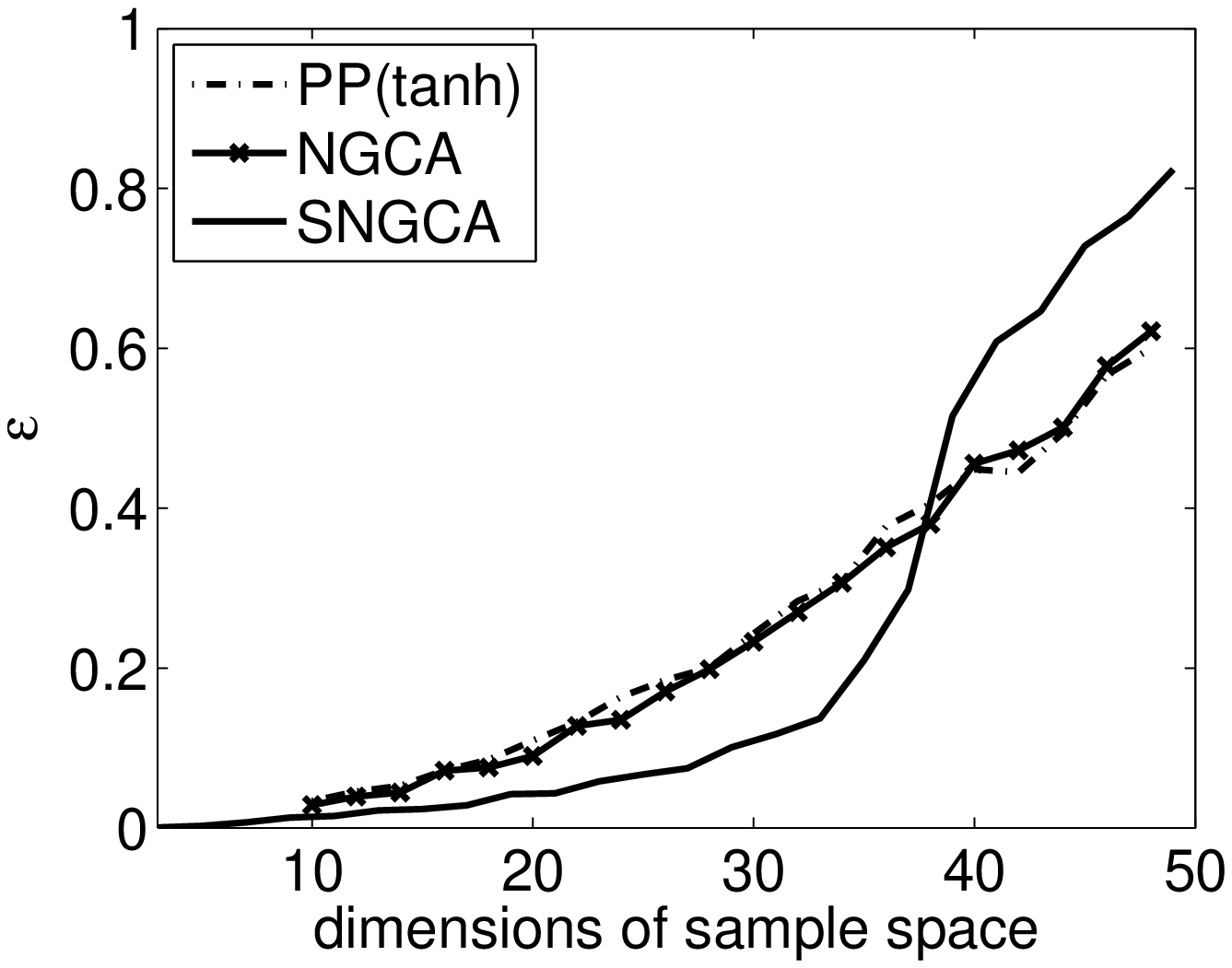}\\
        (A)&(B)\\
        \includegraphics[width=0.4\textwidth]{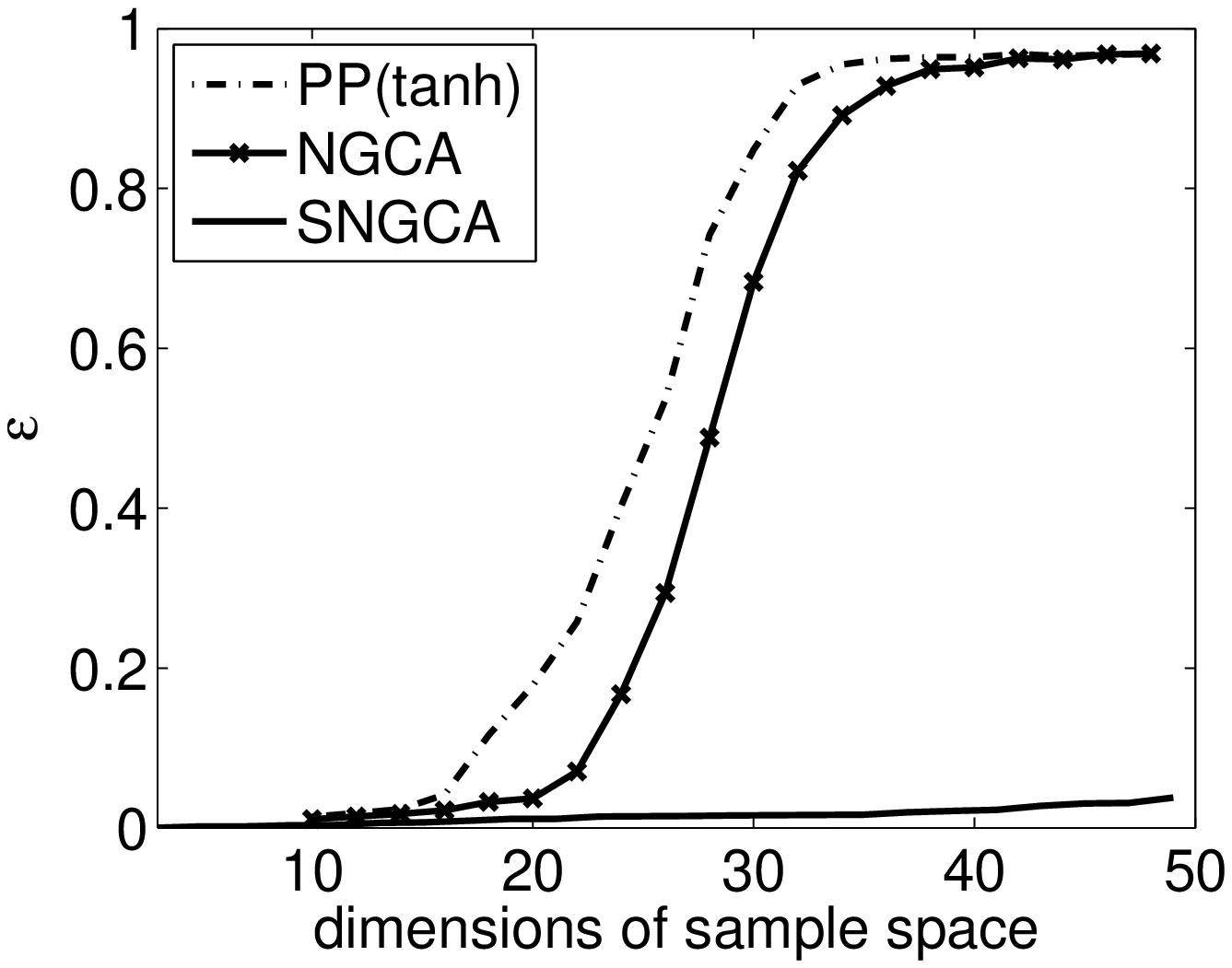}&
        \includegraphics[width=0.4\textwidth]{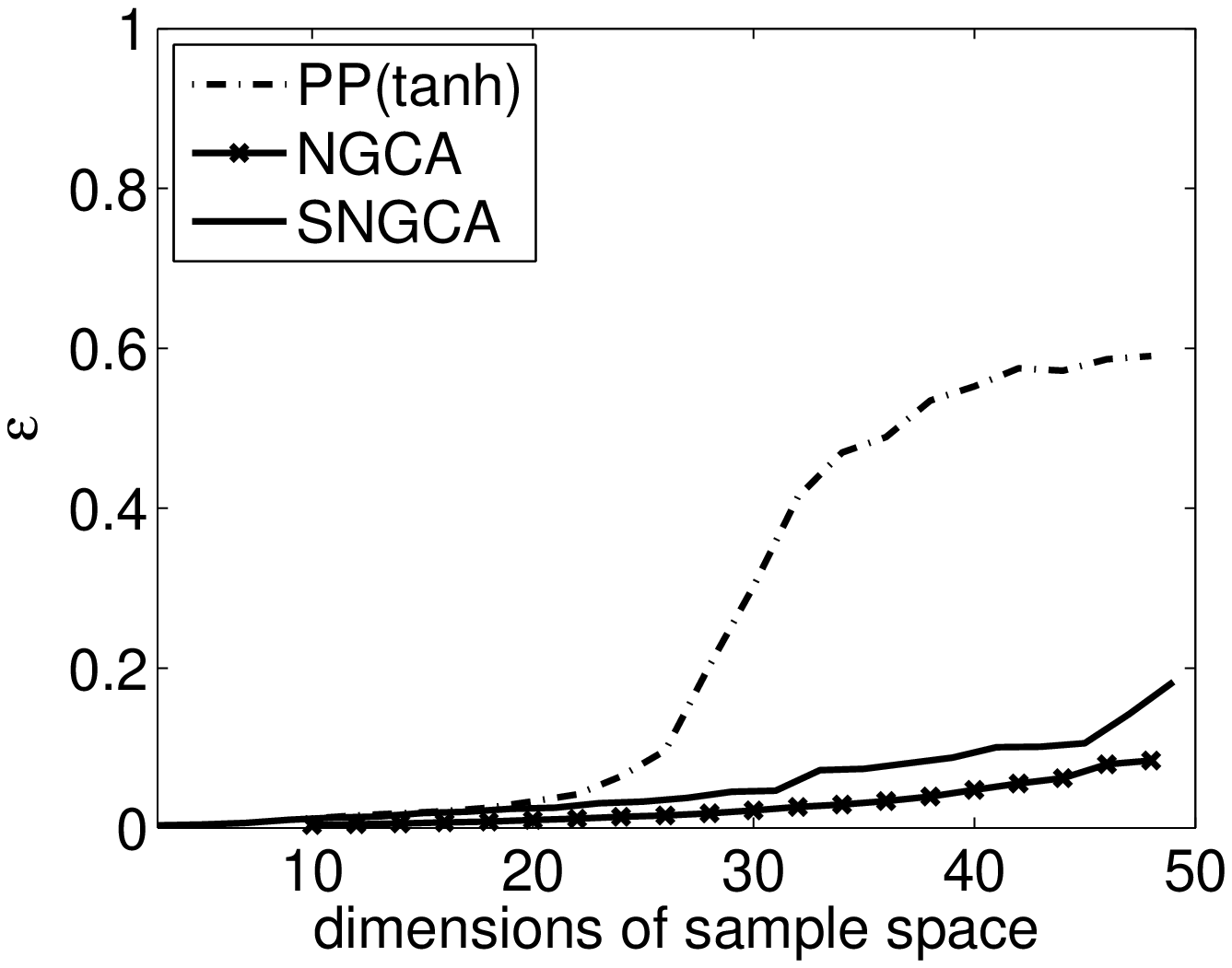}\\
    (C)&(D)\\
        \includegraphics[width=0.4\textwidth]{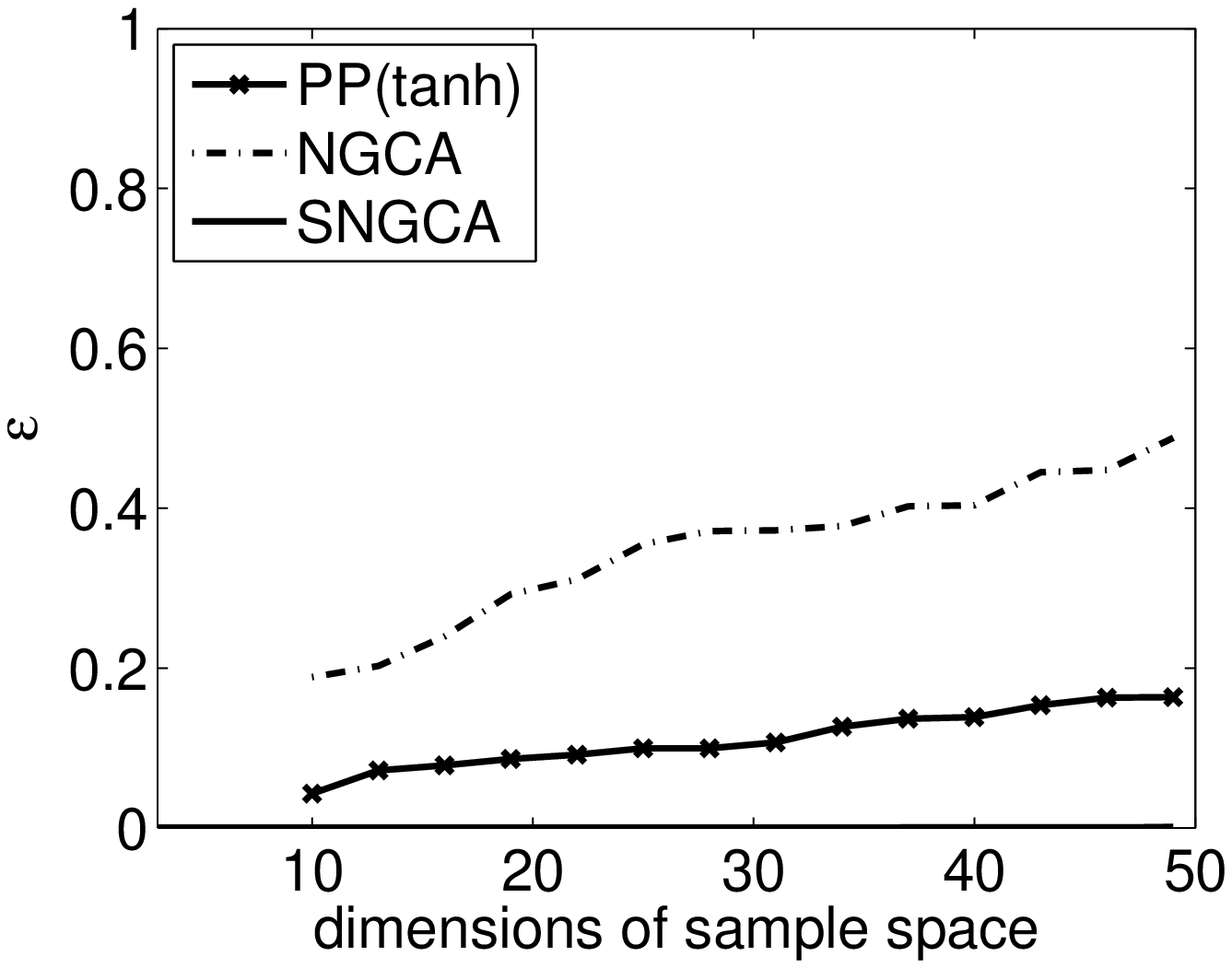}& \\
    (E)& \\
  \end{tabular}
    \caption{results wrt. \( \cE(\widehat{\cI},\cI) \) with increasing number of gaussian components.
    \label{fig:dim-increase}}
\end{center}
\end{figure}

Figure \ref{fig:dim-increase} compares the behavior of SNGCA with PP and
NGCA as the number of standard and homogeneous Gaussian dimensions
increases. As described above we use the test models with \( 2
\)-dimensional non-Gaussian components with unity variance. We plot the
mean of errors \( \varepsilon(\hat{\cI},\cI) \) over \( 100 \) simulations
w.r.t. the test models (A) to (E).\\

Again concerning the mean of errors \( \varepsilon(\hat{\cI},\cI) \) over
\( 100 \) simulations of PP and NGCA we find a transition in the error
criterion to a failure mode for the test models (A), (C) between \( d=30
\) and \( d=40 \) and between \( d=20 \) and \( d=30 \) respectively. For
the test models (B),(D) and (E) we found a relative continuous increase in
\( \varepsilon(\hat{\cI},\cI) \) for the methods PP and NGCA. In
comparison SNGCA fails to analyze test model (A) independently from the
size of the sampling, if the dimension exceeds \( d=12 \). Concerning test
model (B) there is a sharp transition in the simulation result
between \( d=35 \) and \( d=40 \).\\

{\bf Failure modes:} In order to provide a better insight into the details
of the failure modes we present box plots of \( \varepsilon(\hat{\cI},\cI)
\) in the transition phases w.r.t. the models (A) and (B).\\
\begin{figure}[H]
 \begin{center}
    \includegraphics[width=0.8\textwidth]{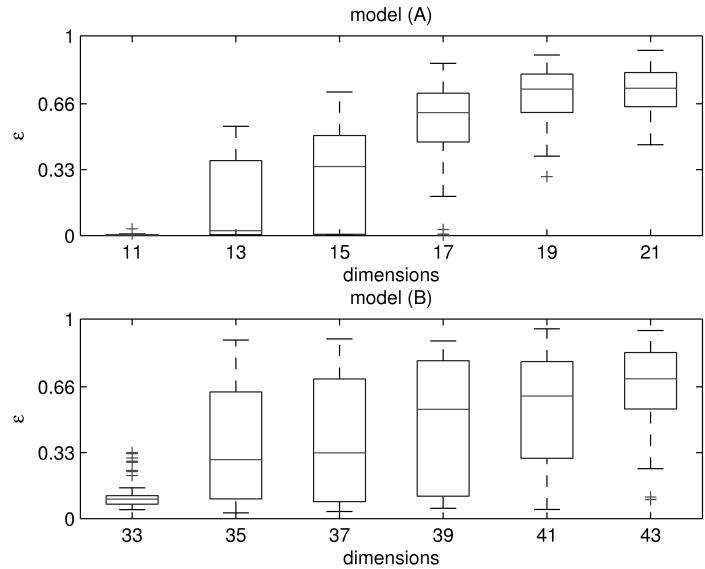}
  \caption{failure modes of SNGCA - upper figure: model (A) - lower figure: model(B)}
  \label{fig:failureModes}
 \end{center}
\end{figure}
Figure \ref{fig:failureModes} demonstrates the differences in the
transition phases of model (A) and (B) respectively. The transition phase
is characterized by a high variance of the estimation error. For model (A)
the increase of the variance \( \sigma^{2}_{\varepsilon} \) of \(
\varepsilon(\hat{\cI},\cI) \) beginning at dimensions \( 13 \) and its
decrease beginning at dimension \( 15 \) indicates that a sharp transition
phase happens in the interval \( [13,15] \). For higher dimensions more
iterations of SNGCA have a decreasing effect on the estimation result.
This indicates that by the sampling of the measurement directions, we can
not detect the non-Gaussian components of the data density. For model (B)
the transition phase starts at dimension \( 35 \) and ends at dimension \( 43 \).\\

Moreover the decrease of \( \sigma^{2}_{\varepsilon} \) towards higher
dimensions and the increase of the mean of \( \varepsilon(\hat{\cI},\cI)
\) is much slower. This indicates that the non-Gaussian density components
might be detectable if we would allow much more iterations of SNGCA and an
enlarged size of the set of measurement directions. This observation
motivates the interpretation that the Monte-Carlo sampling of the
measurement directions is a very poor strategy that fails to provide
sufficient information about the Laplace distribution in high dimensions.
Currently the SNGCA performance is limited by the sampling strategy.

\subsection{Application to real life examples}\label{real}

We consider a simulating of a mixture of oil and gas flowing under high
pressure through a pipeline. Under these physical conditions different
phases of the oil-gas-mixture may exist at the same time in the phase
space \( \Gamma \). Only some of these phase configurations in \( \Gamma
\) are stable over long periods of time. Consequently one expects some
clusters of points in \( \Gamma \) indicating the physical state of the
mixture. The \( 12 \)-dimensional data set, obtained by numerical
simulations of a stationary physical model, was already used before for
testing techniques of dimension reduction \cite{GTM98}. The data set comes
with a subset of training data and a subset of test data. The length of
the time series is \( 1000 \) in each dimension. \\

The task with this data is to find the clusters representing the stable
configurations in the training data set. It is not known a priori if some
dimensions are more relevant than others. However it is known a priori
that the data is divided into \( 3 \) classes, indicated by different
shapes of the data points. The cluster information is not used in finding
the EDR-space. Again we compare SNGCA with NGCA and PP using the
hyperbolic tangent index \rf{tanh}. For PP and NGCA the results are shown
in figure \ref{fig-oildataPP}. They were already published in \cite{a3-3}.

\begin{figure}[h]
  \begin{center}
   \begin{tabular}{@{} cccc@{}}
    \includegraphics[width=0.45\textwidth]{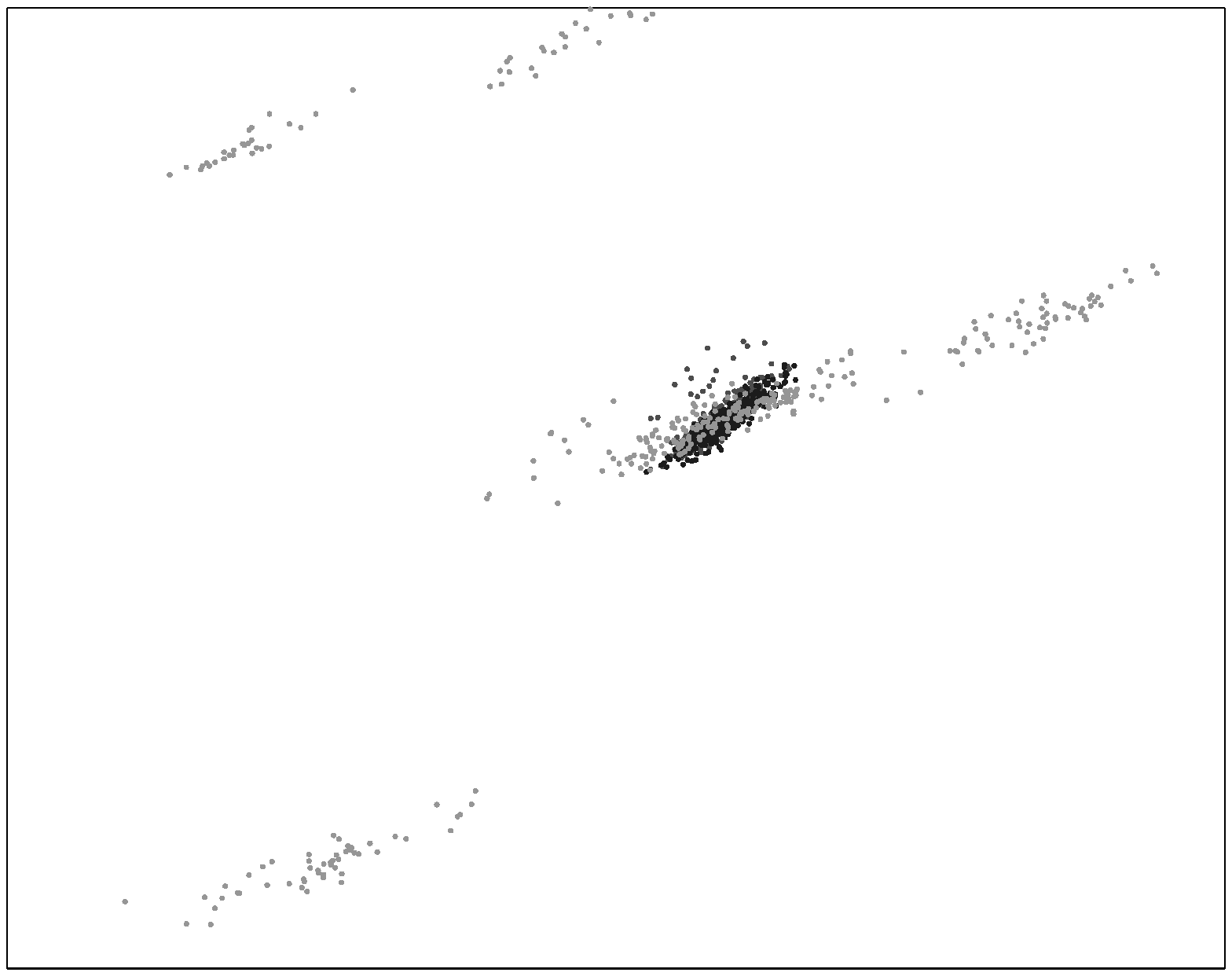}&
    \includegraphics[width=0.45\textwidth]{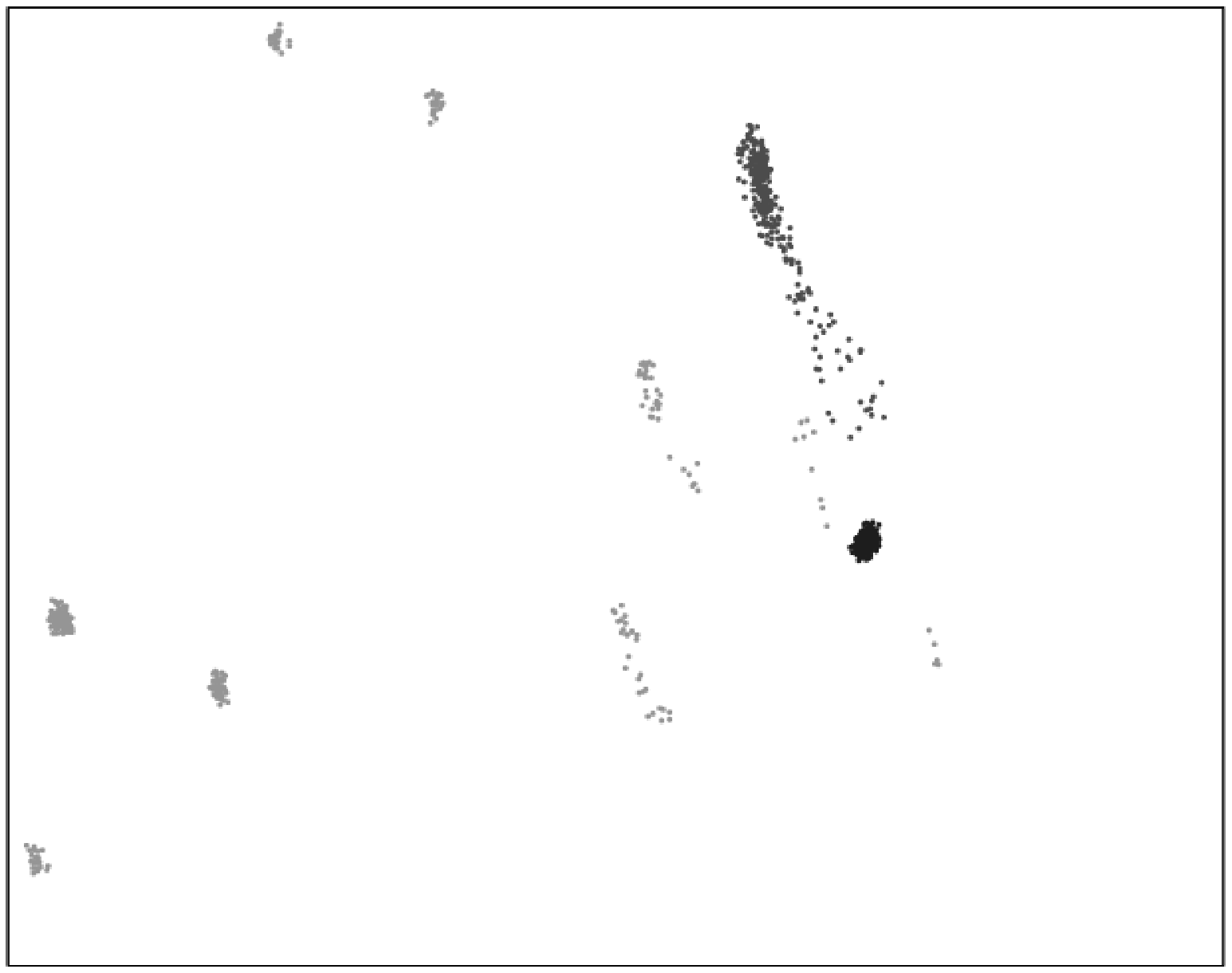}
   \end{tabular}
  \end{center}
  \caption{left: 2D projection of the ''oil flow'' data manually chosen from 3D projection obtained from by vanilla FastICA
  methods using the tanh index - right: projection obtained by NGCA using
  a combination of Fourier, tanh, Gauss-pow3 indices}
  \label{fig-oildataPP}
\end{figure}
Figure \ref{fig-oildataPP} shows a slice through \( \Gamma \) such that
the structure in the data set becomes visible: Using NGCA we can
distinguish \( 10-11 \) clusters versus at
most \( 5 \) for PP with index \rf{tanh}.\\

For the SNGCA method the results are shown in the figure
\ref{fig-oildataB}. SNGCA identifies 3 non-Gaussian dimensions. All
figures are rotated by hand such that the separation of the cluster is
illustrated at best. The next figure \ref{fig-oildataB} shows the result
of the oil-flow data obtained from SNGCA using a combination of the
indices \rf{tanh} and \rf{asymmeg}.
\begin{figure}[h]
    \centering
    \includegraphics[width=0.85\textwidth]{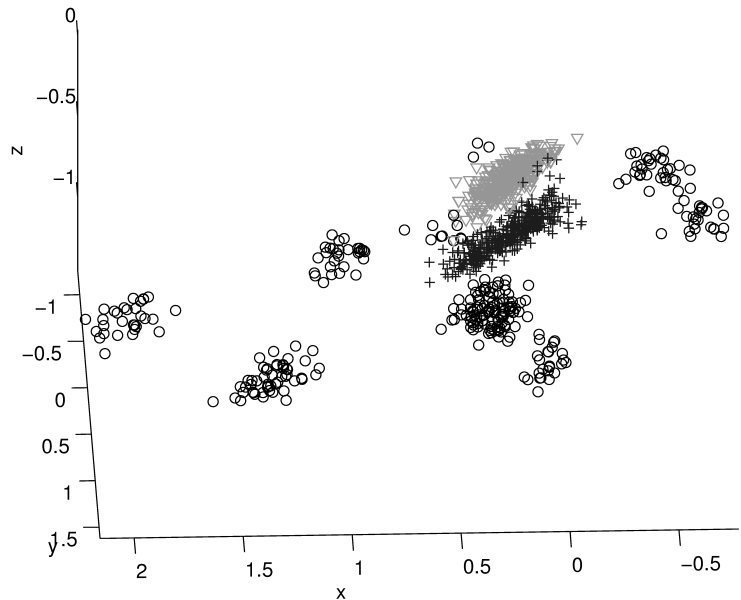}
    \caption{phase configurations of the ''oil flow'' data with apriori cluster mapping induced by crosses, circles and triangles obtained by SNGCA using a combination of asymmetric-Gauss and the tanh index\label{fig-oildataB}}
\end{figure}
In this case we can distinguish \( 10-11 \) clusters versus at most \( 5
\) for PP. Moreover we confirm the result of NGCA on the data set. The
clusters are clearly separated from each other on the SNGCA projection.
Only on the PP projection they are partially confounded in one single
cluster. By applying the projection obtained from SNGCA to the test data,
we found the cluster structure to be relevant. We conclude that SNGCA
gives a more relevant estimation of \( \cI \) than PP. However it is found
that the family of functions \( h_{\omega}(x) \) is an important tuning
parameter in SNGCA: If we use only the tanh-index, we found only 6-7
cluster are identified and they are partially confounded. Hence a
combination should be used in order to cope with symmetric data
distributions.

\section{Conclusion}

We propose a new improved methodology for the non-Gaussian component
analysis, as proposed in \cite{a3-3}. As well as NGCA the suggested method
is based on a semi-parametric framework for separation an uninteresting
multivariate Gaussian noise subspace from a linear subspace, where the
data are non-Gaussian distributed. Both methods assume that the
non-Gaussian contribution to the data density contains the structure in a
given data set. The combined strategy of convex projection and structural
adaptation provides promising results of SNGCA. Moreover SNGCA provides an
estimate for the dimension of the non-Gaussian subspace. On the other
hand, the quality and the numerical complexity of Monte-Carlo sampling of
the measurement directions is the main limitation of the proposed
technique.

\begin{appendix}
\section{Statistical tests}

In this section we shortly report the statistical tests on normality used
the dimension reduction step of SNGCA.

In order to detect a significant asymmetry in the distribution of the
original data projected on the semi-axis of the numerical approximation of
the rounding ellipsoid \(\cE_{\sqrt{d}}\) we use the \( K^{2} \)-test
according to D'Agostino-Pearson \cite{Zar99}. The D'Agostino-Pearson test
computes how far the empirical skewness and kurtosis of the given data
distribution differs from the value expected with a Gaussian distribution.
The test statistic is approximately distributed according to the \(
\chi^{2}_{2} \)-distribution and its empirical data counterpart is given
by
\begin{eqnarray*}
   \hat{K}^{2} &=& \cZ^{2}(\sqrt{b_{1}})+\cZ^{2}(b_{2})
   \\
   \sqrt{b_{1}} &=& \frac{1}{N}\sum_{i=1}^{N}[\sigma^{-1}(X_{i}-\mu)]^3
   \\
   b_{2} &=& \frac{1}{N}\sum_{i=1}^{N}[\sigma^{-1}(X_{i}-\mu)]^4
\end{eqnarray*}
Here \( \mu \) denotes the empirical mean, \( \sigma \) the empirical
standard deviation of the data and \( \cZ(\cdot) \) denotes a normalizing
transformations of skewness and kurtosis. The test is more powerful w.r.t.
an asymmetry of a distribution.\\

Furthermore we use the EDF-test according to Anderson-Darling
\cite{AnscombeGlynn83} with the modification of Stephens
\cite{Stephens86}: Let \( F_{N} \) be the empirical cumulative
distribution function and \( F \) the assumed theoretical cumulative
distribution function. The test statistics \( \cT \) measures the
quadratic deviations between \( F_{N} \) and \( F \):
\begin{eqnarray*}
    \cT=\int_{\R}[F_{N}(x)-F(x)]^{2}\nu(x)\;dF
\end{eqnarray*}
where \( \nu(x) \) is the weighting function \(
\nu(x)=[F_{N}(x)(1-F_{N}(x))]^{-1} \). In sum the data counterpart of \(
\cT \) is given by
\begin{eqnarray*}
 \hat{\cT}=&-cN-c\sum_{i=1}^{N}N^{-1}(2i-1)[\log(F(\sigma^{-1}(X_{i}-\mu))\\
 &+\log(1-F(\sigma^{-1}(X_{N-i+1}-\mu))]
\end{eqnarray*}
where \( c=1+0.75N^{-1}+2.25N^{-2} \). Again \( \mu \) is the empirical
mean and \( s \) the empirical standard deviation of the data. We compute
\( \hat{\cT} \) to detect deviations from normality in the tails of the
projected distributions. The test is rejected if \( \hat{\cT} \) exceeds a
critical value \( cv \) specific for a given level of significance:
\begin{center}
\centering
\begin{tabular}{|c|c|c|c|c|c|}\hline
  \( \alpha \) : &  \( 0.10 \)  & \( 0.05 \)  & \( 0.025 \) & \( 0.01 \)  & \( 0.005 \) \\\hline
  \( cv \) : &  \( 0.631 \) & \( 0.752 \) & \( 0.873 \) & \( 1.035 \) & \( 1.159 \) \\  \hline
\end{tabular}\\
\end{center}
The last test, applied to the projected data is the Shapiro-Wilks test
\cite{ShapiroWilks} based on a regression strategy in the version given by
Royston \cite{Royston82a,Royston82c}:
\begin{eqnarray*}
    W &=& \sigma^{-1}[1-b^{2}(\sigma^{2}(N-1))^{-1}]^{\lambda} \sim \cN(\mu,1)
    \\
    b &=& \sum_{i=1}^{N/2}a_{N-i+1}(X_{N-i+1}-x_{i})\\
    (a_{1},\dots,a_{N}) &=& {m^\top \Sigma^{-1} \over (m^\top \Sigma^{-1^\top}\Sigma^{-1}m)^{1/2}}
\end{eqnarray*}
In this test \( m=(m_{1},\ldots,m_{n}) \) denotes the expected values of
standard normal order statistics for a sample of size \( N \) and \(
\Sigma \) is the corresponding covariance matrix.

\section{Proofs}
\subsection{Proof  of Theorem \ref{empir1}}

We use the following result from the empirical process theory (similar
statements under slightly different assumptions can be found e.g. in
\cite{vdWaart96}). Let \( \cB \) stand for the unit Euclidean ball,
centered at the origin. Similarly,  \( B(\mu,\omegad) = \{ \omega : \|
\omega - \omegad \|_{2} \le \mu \} \) is a ball of radius \( \mu \)
centered at \( \omegad \). For a function \( q(\omega,x) \), denote \(
\E_{N} [q(\omega,X)] = N^{-1}\sum_{i=1}^{N} q(\omega,X_{i}) \).

\def\qox{q^{*}}
\def\qg{q^{*}}
\def\zz{\mathfrak{z}}

\begin{lemma}
\label{Lemp} Let \( q(\omega,x) \) be a continuously differentiable
function of \( \omega \in \cB_{d} \) and \( x \in \R^{d} \) such that for
every \( \omega \in \cB_{d} \)
\begin{eqnarray}
\label{nablaomega}
    \Var \bigl[ q(\omega,X) \bigr]
    \le
    \qox,
    \quad
    \Cov \bigl[ \nabla_{\omega} q(\omega,X) \bigr]
    \le
    \qg I,
\end{eqnarray}
with some \( \qox,\qg > 0 \). Define
\begin{eqnarray*}
     \zeta(\omega)
     =
     N^{1/2} \bigl\{ \E_{N} [q(\omega,X)] - \E [q(\omega,X)] \bigr\}
\end{eqnarray*}
and \( \zeta(\omega,\omegac) = \zeta(\omega) - \zeta(\omegac) \). Then for
any \( \kapla > 1 \), there is \( \lambdab_{1} = \lambdab_{1}(\kapla) > 0
\) such that for any \( \omegad \in \cB_{d} \), \( \mu \le 1 \), and \(
\lambda \le \lambdab_{1} N^{1/2} \)
\begin{eqnarray}
\label{firstexpb}
    \log \E \exp\bigl[ \lambda \zeta(\omegad) \bigr]
    \!\!\!
    & \le &
    \!\!\!
    \kapla \qox \lambda^{2} / 2,
    \\
    \log \E \exp\Bigl[
        \frac{\lambda}{\mu} \sup_{\omega \in B(\mu,\omegad)}
        \zeta(\omega,\omegad)
    \Bigr]
    \!\!\!
    & \le &
    \!\!\!
    2 \kapla \qg \lambda^{2} + \entrl_{d} \, ,
    \qquad
\label{secondexpb}
\end{eqnarray}
where \( \entrl_{d} = \sum_{k=1}^{\infty} 2^{-k} \log (2^{kd}) = 4 d \log
2 \). Moreover, define
\begin{eqnarray*}
    \zz(\lambda)
    =
    \kapla  \bigl( \qox/2 + 2 \qg \bigr) \lambda^{2} + \entrl_{d} .
\end{eqnarray*}
Then for any \( \varepsilon > 0 \)
\begin{eqnarray*}
    \P\biggl(
        \sup_{\omega \in \cB_{d}} \zeta(\omega)
        \ge
        2\lambda^{-1} \bigl[ \zz(\lambda) + \log \varepsilon^{-1} \bigr]
    \biggr)
    \le
    \varepsilon .
\end{eqnarray*}
\end{lemma}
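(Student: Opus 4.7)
My plan is to establish the three assertions of Lemma~\ref{Lemp} in turn: a pointwise sub-Gaussian MGF bound at $\omegad$, a dyadic-chaining bound on the increment process over $B(\mu,\omegad)$, and a supremum tail estimate obtained from their combination via exponential Markov. The i.i.d.\ structure of $X_{1},\ldots,X_{N}$ will be used throughout to factorize the MGF of $\zeta$ applied to any fixed integrand.

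For \eqref{firstexpb}, write $\zeta(\omegad) = N^{-1/2}\sum_{i=1}^{N} Y_{i}$ with $Y_{i} = q(\omegad,X_{i}) - \E q(\omegad,X)$ i.i.d., centered, and $\Var(Y_{i}) \le q^{*}$ by hypothesis. Independence gives $\log \E \exp[\lambda\zeta(\omegad)] = N \log \E\exp[(\lambda/N^{1/2})Y_{1}]$. A second-order Taylor expansion of the scalar log-MGF contributes $\tfrac12 q^{*}\lambda^{2}$, and the cubic remainder is of order $\lambda^{3}/N^{1/2}$; restricting $\lambda \le \lambdab_{1}(\kapla) N^{1/2}$ with $\lambdab_{1}$ small enough forces this remainder to fit inside the slack $(\kapla-1)\, q^{*} \lambda^{2}/2$. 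This step is routine and rests on mild tail control of $Y_{1}$ implicit in the setup.

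The second bound \eqref{secondexpb} is the heart of the lemma and is proved by dyadic chaining. For each $k\ge 1$ fix an $\mu 2^{-k}$-net $\cN_{k}$ of $B(\mu,\omegad)$ of cardinality at most $2^{kd}$ (standard volume argument), and for $\omega\in B(\mu,\omegad)$ let $\omega^{(k)}\in\cN_{k}$ be a nearest point, with $\omega^{(0)}=\omegad$. Continuity yields the telescoping identity $\zeta(\omega,\omegad) = \sum_{k\ge 1}[\zeta(\omega^{(k)})-\zeta(\omega^{(k-1)})]$. Each increment has the same form as $\zeta$ applied to the integrand $q(\omega^{(k)},\cdot)-q(\omega^{(k-1)},\cdot)$, whose variance I bound via
\[
q(\omega,x)-q(\omega',x) = \int_{0}^{1} (\omega-\omega')^{\T} \nabla_{\omega} q(\omega' + t(\omega-\omega'),x)\,dt
\]
combined with the second hypothesis of \eqref{nablaomega}, obtaining at most $q^{*} \|\omega^{(k)}-\omega^{(k-1)}\|_{2}^{2} \le 9\, q^{*} \mu^{2} 4^{-k}$. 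I then apply \eqref{firstexpb} to each of the at most $|\cN_{k}|\,|\cN_{k-1}| \le 2^{(2k-1)d}$ admissible pairs at scale $k$, union-bound them, and aggregate across scales via Jensen's inequality with convex weights $\alpha_{k} = 2^{-k}$ (using $\exp(\sum_{k} z_{k}) \le \sum_{k} \alpha_{k} \exp(z_{k}/\alpha_{k})$). This produces simultaneously the entropy term $\entrl_{d} = \sum_{k} 2^{-k}\log 2^{kd}$ and a geometric series in $\lambda^{2}$ that collapses to the prefactor $2\kapla q^{*}\lambda^{2}$. The main obstacle is precisely this bookkeeping: the scale-$k$ entropy $\log|\cN_{k}|^{2}$, the scale-$k$ increment variance $\propto \mu^{2} 4^{-k}$, the rescaling by $\lambda/\mu$, and the weight $\alpha_{k}$ must line up so that no $k$-dependent blowup survives after summation.

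For the supremum tail estimate, specialize \eqref{secondexpb} to $\omegad = 0$, $\mu = 1$ and split $\sup_{\omega}\zeta(\omega) = \zeta(0) + \sup_{\omega\in\cB_{d}}[\zeta(\omega)-\zeta(0)]$. Cauchy--Schwarz applied to the product $\exp[\lambda\zeta(0)]\cdot\exp[\lambda\sup_{\omega}(\zeta(\omega)-\zeta(0))]$ combines \eqref{firstexpb} and \eqref{secondexpb} into $\log \E\exp[\lambda\sup_{\omega}\zeta(\omega)] \le \zz(\lambda)$, the factor $2$ in the stated threshold for $t$ tracking the Cauchy--Schwarz loss. Exponential Markov $\P(\sup_{\omega}\zeta(\omega)\ge t) \le \exp(-\lambda t + \zz(\lambda))$ evaluated at $t = 2\lambda^{-1}[\zz(\lambda)+\log\varepsilon^{-1}]$ then delivers the claimed tail bound. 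Assertions (1) and (3) are essentially mechanical; the substantive work is entirely in the chaining calculation of (2).
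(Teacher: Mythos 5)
Your parts (1) and (3) track the paper's own argument closely: the paper also gets \eqref{firstexpb} from the behaviour of the normalized log-moment generating function $g_{0}(\lambda;\omega)$ near $\lambda=0$ (using $g_{0}(0)=g_{0}'(0)=0$, $g_{0}''(0)<1$ and analyticity, which is the same content as your Taylor expansion with the slack $(\kapla-1)q^{*}\lambda^{2}/2$ absorbing the remainder), and the tail bound is obtained exactly as you describe, by splitting off $\zeta(\omegad)$, applying Cauchy--Schwarz to the product of the two exponentials, and using exponential Chebyshev. The real divergence is in part (2): the paper does \emph{not} carry out any chaining. It verifies an exponential moment bound for the directional gradients, $\log \E \exp\{2\lambda(\kapla q^{*})^{-1/2} u^{\T}\nabla_{\omega}\zeta(\omega)\}\le 2\lambda^{2}$, i.e.\ condition $(\cc{E}D)$ of \cite{Spexp2009}, and then invokes Lemma~5.1 of that reference to obtain \eqref{secondexpb} wholesale. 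You are in effect reproving that external lemma, which is legitimate, but your proof must then actually close.

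As written, your aggregation step does not close. Combining the additive form of Jensen, $\exp(\sum_{k}z_{k})\le\sum_{k}\alpha_{k}\exp(z_{k}/\alpha_{k})$, with a union bound over the at most $|\cN_{k}|\,|\cN_{k-1}|\le 2^{(2k-1)d}$ admissible pairs at scale $k$ yields
\begin{equation*}
\E\exp\Bigl[\tfrac{\lambda}{\mu}\sup_{\omega}\zeta(\omega,\omegad)\Bigr]
\;\le\;
\sum_{k\ge 1} 2^{-k}\, 2^{(2k-1)d}\, \ex^{c\kapla q^{*}\lambda^{2}},
\end{equation*}
and the series $\sum_{k}2^{-k}2^{2kd}$ diverges for every $d\ge 1$; the weight $2^{-k}$ cannot suppress the net cardinality. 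The entropy term $\entrl_{d}=\sum_{k}2^{-k}\log(2^{kd})$ that you are aiming for is a \emph{weighted sum of log-cardinalities}, and that structure only emerges from the multiplicative (H\"older) decomposition across scales, $\E\prod_{k}\ex^{z_{k}}\le\prod_{k}\bigl(\E\,\ex^{z_{k}/\alpha_{k}}\bigr)^{\alpha_{k}}$, applied so that the scale-$k$ cardinality enters as $\alpha_{k}\log\bigl(|\cN_{k}|\,|\cN_{k-1}|\bigr)$ while the rescaled variance term $\alpha_{k}\cdot(\lambda/\alpha_{k})^{2}\mu^{2}4^{-k}q^{*}$ stays of order $\lambda^{2}$ after summation. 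You must also check that the rescaled MGF parameter at scale $k$, once normalized by the increment's standard deviation $\propto\mu 2^{-k}$, remains in the admissible range $\lambda\le\lambdab_{1}N^{1/2}$ uniformly in $k$ (it does, because the two rescalings cancel, but this needs to be said). With Jensen replaced by H\"older the argument goes through; without that replacement the key estimate \eqref{secondexpb} is not established.
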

\begin{proof}
{\noindent}Define for \( \omega \in \cB_{d} \)
\begin{eqnarray*}
    g_{0}(\lambda;\omega)
    =
    \log \E \exp \Big[
        \frac{\lambda}{\sqrt{\kapla \qox}} \bigl\{ q(\omega,X_{1})
        - \E[q(\omega,X_{1})]  \bigr\}
    \Big] .
\end{eqnarray*}
Then \( g_{0}(\lambda;\omega) \) is analytic in \( \lambda \) and
satisfies \( g_{0}(0;\omega) = g'_{0}(0;\omega) = 0 \). Moreover,  the
condition (\ref{nablaomega}) implies \( g''_{0}(0;\omega) < 1 \).
Therefore,  there is some \( \lambdab_{1} > 0 \) such that for any \(
\lambda_{1} \le \lambdab_{1} \) and any unit vector \( \omega \), it holds
\(
    g_{0}(\lambda_{1};\omega)
    \le
    \lambda_{1}^{2}/2
\). Independence of the \( X_{i} \)'s implies (\ref{firstexpb}) for \(
\lambda  \le \lambdab_{1} N^{1/2} (\kapla \qox)^{-1/2} \). In the same
way,  for \( \omega,u \in \cB_{d} \) define \( \zeta(\omega,X) =
\nabla_{\omega} q(\omega,X_{1})
        - \E[ \nabla_{\omega} q(\omega,X_{1})] \) and
\begin{eqnarray*}
    g(\lambda;\omega,u)
    = \log \E \exp \big[
        \frac{2 \lambda u^{\T} }{\sqrt{\kapla \qg}} \zeta(\omega,X_{1})
    \big] .
\end{eqnarray*}
Then similarly to the above, the function \( g(\lambda;\omega,u) \) is
analytic in \( \lambda \) and satisfies with some \( \lambdab_{1} > 0 \),
any \( \lambda_{1} \le \lambdab_{1} \) and any unit vectors \( u \) and \(
\omega \)
\begin{eqnarray*}
    g(\lambda_{1};\omega,u)
    \le
    2 \lambda_{1}^{2}.
\end{eqnarray*}
The bound (\ref{secondexpb}) is derived from \cite{Spexp2009}, Lemma~5.1.
Independence of the \( X_{i} \)'s yields for \( \lambda \le \lambdab_{1}
N^{1/2} (\kapla \qg)^{-1/2} \)
\begin{eqnarray*}
    \log \E \exp \biggl\{
        \frac{2 \lambda}{\sqrt{\kapla \qg}} u^{\T} \nabla \zeta(\omega)
    \biggr\}
    \le
    2 \lambda^{2} .
\end{eqnarray*}
This means that the condition \( (\cc{E}D) \) of \cite{Spexp2009} is
verified and the result (\ref{secondexpb}) follows from \cite{Spexp2009},
Lemma 5.1. Introduce a random set \( A = \{ (\lambda/2) \sup_{\omega}
\zeta(\omega) > \zz(\lambda)
        + \log \varepsilon^{-1} \} \).
and \( A^{c} \) is its complement. By the Cauchy-Schwartz inequality
\begin{eqnarray*}
    \P(A^{c})
    \!\!\!
    & \le &
    \!\!\!
    \E \exp\biggl\{
        \frac{\lambda}{2} \sup_{\omega} \zeta(\omega) - \zz(\lambda)
        - \log \varepsilon^{-1}
    \biggr\}
    \nn
    \!\!\! & \le & \!\!\!
    \varepsilon
    \E^{1/2} \exp\bigl\{ \lambda \zeta(\omegad) - \kapla \qox \lambda^{2}/2  \bigr\}
    \nn
    && \!\!\! \times \,
    \E^{1/2} \exp\bigl\{
        \lambda \sup_{\omega} \zeta(\omega,\omegad)
        - 2 \kapla \qg \lambda^{2} - \entrl_{d}
    \bigr\}
    \le \varepsilon
\end{eqnarray*}
and the last result follows.\\
\end{proof}
The result of Lemma~\ref{Lemp} can be easily extended to the case of a
vector function \( q(\omega,x) \in \R^{d} \):
\begin{eqnarray*}
    \P\biggl(
        \sup_{\omega \in \cB_{d}} \| \zeta(\omega) \|_{\infty}
        \ge
        2\lambda^{-1} \bigl[ \zz(\lambda) + \log (d /\varepsilon) \bigr]
    \biggr)
    \le
    \varepsilon .
\end{eqnarray*}
This fact can be obtained by applying Lemma~\ref{Lemp} to each component
of the vector \( \zeta(\omega) \). The term \( \log (d /\varepsilon) \) is
responsible for the overall deviation probability.\\

Let now \( f(x,\omega) \) be a twice continuously differentiable function
of \( \omega \in \cB_{d} \) and \( x \in \R^{d} \) such that for every \(
j \le d \), \( \omega \in \cB_{d} \), and \( x \in \R^{d} \), it holds
\begin{eqnarray*}
    \Var \bigl[ X_{j} \, f(X,\omega) \bigr]
    \le
    f^{*}_{1},
    \quad
    \Cov \bigl[ X_{j} \, \nabla_{\omega} f(X,\omega) \bigr]
    \le
    f^{*}_{1} I,
    \nn
    \Var \biggl[ \frac{\partial}{\partial x_{j}} f(X,\omega) \biggr]
    \le
    f^{*}_{1},
    \quad
    \Cov \biggl[ \nabla_{\omega} \frac{\partial}{\partial x_{j}} f(X,\omega) \biggr]
    \le
    f^{*}_{1} I,
\end{eqnarray*}
Then for any \( \kapla > 1 \), there is \( \lambdab_{1} =
\lambdab_{1}(\kapla) > 0 \) and for any \( \varepsilon > 0 \), a random
set \( A \) with \( \P(A) \ge 1 - \varepsilon \) such that on \( A \) it
holds by Lemma~\ref{Lemp}
\begin{eqnarray*}
    \sup_{\omega \in \cB_{d}}
    \big\| \E_{N} [X f(X,\omega)] - \E [X f(X,\omega)] \big\|_{\infty}
    \le
    \delta_{N},
    \nn
    \sup_{\omega \in \cB_{d}}
    \big\| \E_{N} [\nabla_{x} f(X,\omega)] - \E [\nabla_{x} f(X,\omega)] \big\|_{\infty}
    \le
    \delta_{N},
\end{eqnarray*}
where
\begin{eqnarray*}
    \delta_{N}
    =
    N^{-1/2} \inf_{\lambda \le \lambdab_{1} N^{1/2}}
    \bigl\{
        5 \kapla f^{*}_{1} \lambda
        + 2\lambda^{-1} \bigl[ \entrl_{d} + \log(2d/\varepsilon) \bigr]
    \bigr\}.
\end{eqnarray*}
By construction of vectors \( \hat{\gamma}_{l} \) and \( \hat{\eta}_{l}
\), it holds on \( A \)
\begin{eqnarray*}
    \max_{1 \le l \le L} \| \hat{\gamma}_{l} - \gamma_{l} \|_{\infty}
    \le
    \delta_{N},
    \quad
    \max_{1 \le l \le L} \| \hat{\eta}_{l} - \eta_{l} \|_{\infty}
    \le
    \delta_{N} \, .
\end{eqnarray*}
This implies for any \( \| c \|_{1} \le 1 \)
\begin{eqnarray*}
    \| \hat{\gamma}(c) - \gamma(c) \|_{\infty}
    \le
    \delta_{N},
    \;\;\;
    \| \hat{\eta}(c) - \eta(c) \|_{\infty}
    \le
    \delta_{N}.
\end{eqnarray*}
The constraint \( \hat{\gamma}(\hat{c}) = 0 \) implies \( \|
\gamma(\hat{c}) \|_{\infty} \le \delta_{N} \), thus
\[
    \| \gamma(\hat{c}) \|_{2}
    \le
    \sqrt{d} \, \delta_{N},
\]
and by \rf{bounds}
\begin{eqnarray*}
   \lefteqn{\bigl\| (I - \Pi^{*}) \hat{\eta}(\hat{c}) \bigr\|_{2}}
   \\
   & \le &
   \bigl\| (I - \Pi^{*}) \{ \hat{\eta}(\hat{c}) - {\eta}(\hat{c}) \} \bigr\|_{2}
   +
   \bigl\| (I - \Pi^{*}) \eta(\hat{c}) \bigr\|_{2}
   \\
   & \le &
   \bigl\| \hat{\eta}(\hat{c}) - \eta(\hat{c}) \bigr\|_{2}
   +
   \bigl\| \Sigma^{-1} \gamma(\hat{c}) \bigr\|_{2}
   \\
   & \le &
   \sqrt{d} \bigl( \delta_{N} + \bigl\| \Sigma^{-1} \bigr\|_{2} \delta_{N} \bigr).
\end{eqnarray*}
\subsection{Proof of Theorem \ref{round1}}
Let \( \cc{S} \) stand for the convex envelope of \(
\{\pm\hat{\beta}_{j}\}_{j=1}^J \). As \( \cc{E}_{1}(B) \) is inscribed in
\( \cc{S} \), its support function \( \xi_{\cc{E}_{1}(B)}(x)=\max_{s\in
\cc{E}_{1}(B)} s^{\T}x \) is majorated by that of \( \cc{S} \):
\[
    \xi_{\cc{E}_{1}(B)}(v)
    \le
    \xi_{\cc{S}}(v)
    =
    \max_{j=1,...,J} |v^{\T}\hat{\beta}_{j}|,\;\;
    \mbox{for any}\;\;v\in \R^{d}.
\]
Next, the support function of the ellipsoid \( \cc{E}_{1}(B) \) is
\[
    \xi_{\cc{E}_{1}(B)}(v)
    =
    (v^{\T}B^{-1}v)^{1/2},
\]
so that the condition \( \| \hat{\beta}_{j}-\beta_{j} \|_{2} \le \varrho
\) implies
\[
    v^{\T}B^{-1}v
    \le
    \max_{j=1,...,J} |v^{\T} \hat{\beta}_{j}|^{2}
    \le
    \bm^{2},
\]
for any \( v\perp \cI \).\\

Let us prove the second claim of the proposition. Let \( \Pi^{*} \) be a
projector onto \( \cI \). By the assumption of the proposition there exist
coefficients \( \mu_{j} \) with \( \sum_{j} \mu_{j} \le 1 \) such that
\begin{eqnarray*}
    S
    \eqdef
    \frac{1}{2} \biggl[
        \sum_{j} \mu_{j} \beta_{j} \beta_{j}^{\T} - 2\bm^{2}\Pi^{*}
    \biggr]
    \succeq 0.
\end{eqnarray*}
This implies (\ref{lambdam}). Now, for any such \( S \) and its
pseudo-inverse \( S^{+} \), the ellipsoid, \( \cc{E}^{f}_{1}(S^{+}) \)
with
\[
    \cc{E}^{f}_{1}(S^{+})
    =
    \{x\in \cI \mid x^{\T}S^{+}x\le 1\}
\]
is inscribed into \( \cc{S} \). Indeed, the support function \(
\xi_{\cc{E}^{f}_{1}(S^{+})}(x) = (x^{\T}Sx)^{1/2} \) of this ellipsoid
fulfills for \( x \in \cB_{d} \)
\begin{eqnarray*}
    \xi_{\cc{E}^{f}_{1}(S^{+})}(x)
    &\le&
    \biggl(
        \sum_{j} \mu_{j} \Bigl[ \frac{1}{2}(x^{\T}\beta_{j})^{2} - \bm^{2} \Bigr]
    \biggr)^{1/2}
    \\
    &\le&
    \biggl(
        \sum_{j} \mu_{j} \bigl| x^{\T}\hat{\beta}_{j} \bigr|^{2}
    \biggr)^{1/2}
    \\
    &\le&
    \max_{1\le j\le J} |x^{\T}\hat{\beta}_{j}|
    =
    \xi_\cc{S}(x),
\end{eqnarray*}
Now we are done: as the ellipsoid \( \cc{E}^f_{1}(S^{+}) \) is inscribed
into \( \cc{S} \), it is contained in the concentric to \( \cc{E}_{1}(B)
\) ellipsoid \( \cc{E}_{\sqrt{d}}(B) \) which covers \( \cc{S} \).\\

To show the last statement of the theorem, observe that
\begin{eqnarray*}
    \Tr\bigl[(\hat{\Pi}-\Pi^{*})^{2}\bigr]
    =
    2(m-\Tr[\Pi^{*}\hat{\Pi}])
    =
    2\Tr\bigl[(I-\Pi^{*})\hat{\Pi}\bigr].
\end{eqnarray*}
On the other hand, using the second claim one gets
\begin{eqnarray*}
    \Tr\bigl[(I-\Pi^{*})\hat{\Pi}\bigr]
    &\le&
    (d-m)\sup_{v\perp\cI} v^{\T}\hat{\Pi}v
    \\
    &\le &
    (d-m)\sup_{v\perp\cI} \frac{v^{\T}B^{-1}v}{\lambda_{m}(B^{-1})}
    \\
    &\le&
    \frac{2 d^{3/2}\bm^{2}}{ \lambda^{*}-2\bm^{2}}.
\end{eqnarray*}

\section{The algorithm}
Here we present the full algorithmic description of the SNGCA procedure.
We start with the linear estimation subprocedure:
\begin{algorithm2e}[H]\label{alg:estimation}
\SetLine \dontprintsemicolon \KwData{\( Y \),\( L \),\( J \)}
\KwResult{\( \{\hat{\beta}_{j}\}_{j=1}^J \)}%
{\bf Sampling}: choice of measurement directions\; %
\For{j=1 \KwTo J}{
    \For{l=1 \KwTo L}
    {
      Compute:\;
      \( \hat{\eta}_{jl}=N^{-1}\sum_{i=1}^N\nabla h_{\omega_{jl}}(Y_i) \)\;
      \( \hat{\gamma}_{jl}=N^{-1}\sum_{i=1}^N Y_i h_{\omega_{jl}}(Y_i) \)\;
    }
    Compute \( \hat{c}_{j} \) as in \rf{chat} and  \( \hat{\beta}_{j}=\sum_{l=1}^{L}\hat{c}_{j}\hat{\eta}_{jl} \).\;
} \caption{linear estimation of \( \beta(\psi_{h,c}) \)}
\end{algorithm2e}\\[2ex]
The following subprocedure reports the computation of the
$\sqrt{d}$-rounding ellipsoid based on a proposal in \cite{Nestreov04}:
\begin{algorithm2e}[H]\label{alg:ellipsoid}
\SetLine \dontprintsemicolon \KwData{\( \{\hat{\beta}_{j}\}_{j=1}^J \)}
\KwResult{\( \hat{B} \), }%
Let \( \delta_{i}^{k^{\ast}} = \max_{1\leq j\leq J} \;\langle
\hat{\beta}_{j},\hat{B}_{i} \hat{\beta}_{j} \rangle \) and set \(
\nu_{i}=\delta_{i}^{k^{\ast}}d^{-1} \).\; Let \( \hat{B}_0 \) be the
inverse empirical covariance matrix of the\; \( \hat{\beta}_{j} \) and set
\( t_{i}=\nu_{i}(\delta_{i}^{k^{\ast}}d^{-1}-1)^{-1} \).\\ Moreover let \( i \) be the loop index.\;%
\Repeat{\( \delta_i^{k^{\ast}}\leq C\cdot d \) where \( C \) is a tuning parameter.}%
{%
    \( x_{i}=\hat{B}_{i}\hat{\beta}_{k^{\ast}} \)\; %
    \( \hat{B}_{i+1}=(1-t_{i})^{-1}\Big(\hat{B}_{i}-t_{i}(1+\nu_{i})^{-1}x_{i}x_{i}^{\T} \Big) \)\; %
    \( \delta_{i+1}^{k^{\ast}}=(1-t_{i})^{-1}\Big(\delta_{i}^{k^{\ast}}-t_{i}(1+\nu_{i})^{-1}\langle \hat{\beta}_{k^{\ast}},x_{i}\rangle^{2} \Big) \)\; %
} \caption{Compute of the \( \sqrt{d} \)-rounding of the MVEE}
\end{algorithm2e}\\[2ex]

The next algorithm \ref{alg:dimRed} reports the pseudocode for
constructing a reduced basis of the target space from the estimated
elements by means of algorithm \ref{alg:ellipsoid}:
\begin{algorithm2e}[H]\label{alg:dimRed}
\SetLine \dontprintsemicolon \KwData{\( \hat{B} \)}
\KwResult{\( \big\langle \text{first } m \text{ eigenvectors of } \hat{B} \big\rangle \)}%
Let \( \hat{V} \) be the matrix of eigenvectors \( \hat{v}_{i} \) from \(
\hat{B} \)\;%
computed according to algorithm \ref{alg:ellipsoid}.\;
\For{i=1 \KwTo d}%
{%
   Project the data orthogonal on \( \hat{v}_{i} \).\;%
   Compute tests on normality of the projected data.\;%
} %
Discard every eigenvector with associated normal \;%
distributed projected data.\;%
\caption{Dimension Reduction}
\end{algorithm2e}\\[2ex]

In algorithm \ref{alg:estimation} we start with a random initialization of
the non-parametric estimator \( \hat{\beta}_{j} \) by means of a
Monte-Carlo sampling of the directions \( \omega_{jl} \) and \( \xi_{j}
\). However we can use the result of the first iteration \( k=1 \) of
SNGCA in order to accumulate information about \( \cI \) in a sequence \(
\hat{\cI}_{1},\hat{\cI}_{2},\ldots \) of estimators of the target space.
The procedure is described in detail in algorithm \ref{alg:adaptation}.\\[2ex]
\begin{algorithm2e}[H]\label{alg:adaptation}
\SetLine \dontprintsemicolon \KwData{\( \big\langle \text{first } m \text{ eigenvectors of } \hat{B} \big\rangle \)}\;%
Let \( \{ \hat{v_{i}} \}_{i=1}^m \) denote the reduced set of eigenvectors from \;%
\(\hat{B} \) and let \( k \) iterations be completed. To initialize
iteration \( k+1
\) choose random numbers \( z_{j1},\ldots,z_{jm} \) \;%
and \( u_{l1},\ldots,u_{lm} \) from \( \cU_{[-1,1]} \) and set \;%
\( \quad\quad\quad\xi_{j}:=\sum_{s=1}^{m} z_{js}\hat{v}_{i_s} \mbox{  for  } 1\leq j \leq n_{1}<J \)\;%
\( \quad\quad\quad\omega_{l}:=\sum_{s=1}^{m} u_{ls}\hat{v}_{i_s} \mbox{  for  } 1\leq l \leq n_{2} <L \)\;%
Then define \( \omega_{L-n_{2}},\ldots,\omega_{L} \) and \(
\xi_{J-n_{1}},\ldots,\xi_J \)
analogous to the case \( k=1 \). Now compose the sets\;%
\( \quad\quad\quad\{\xi_{1}^{(k)},\ldots,\xi_{n_{1}}^{(k)},\xi_{n_{1}+1}^{(k)},\ldots,\xi_J^{(k)}\} \) \;%
\( \quad\quad\quad\{\omega_{1}^{(k)},\ldots,\omega_{n_{2}}^{(k)},\omega_{n_{2}+1}^{(k)},\ldots,\omega_{L}^{(k)}\} \)\;%
For the initialization in the case \( k=k+1 \). Moreover we choose \(
n_{1}=kd \) and \( n_{2}=kd \) until \( n_{1} > J-d \) or \( n_{2} > L-d
\). Otherwise set \( n_{1} = J-d \) or
\( n_{2} = L-d \).\;%
\caption{structural adaptation of the linear estimation }
\end{algorithm2e}

{\bf Choice of parameters:} One of the advantages of the algorithm
proposed above is the fact that there are only a few tuning parameters.
\begin{itemize}
\item[i)] Suppose now that \( \omega_{i} \) is an absolute continuous random variable with
\( \omega_{i}\sim\cU_{[-1,1]} \). Without loss of generality we set \(
e=(1,0,\ldots,0) \).  Due to the normalization of \(
(\omega_{1},\ldots,\omega_{d}) \), it holds:
\begin{eqnarray*}
   \P\big( |(\omega_{1},\ldots,\omega_{d})^{\T}e | \geq 0.5\big)=\big(\sqrt{d}\big)^{-1}
\end{eqnarray*}
However the choice of \( J \) and \( L \) heavily depends on the
non-gaussian components. In the experiments we use \( 7d\leq J \leq 18d \)
and \( 6d\leq L \leq 16d \).
\item[ii)] Set the parameter of the stopping rule to \( \delta=0.05 \).
\item[iii)] Set the constant in the stopping rule for the computation of the MVEE to \( C=2 \).
\item[iv)] Set the significance level of the statistical tests to \( \alpha=0.05 \).\\
\end{itemize}

Finally we give a description of the complete algorithm.
\newpage

\begin{algorithm2e}[H]\label{alg:full}
\SetLine \dontprintsemicolon \KwData{\( \{X_i\}_{i=1}^N \),\( L \),\( J
\),\( \alpha \)}
\KwResult{\( \hat{\cI} \)}%
\BlankLine
{\bf Normalization:} The data \( (X_{i})_{i=1}^{N} \) are recentered. Let \;%
\(\sigma=(\sigma_{1},\ldots\sigma_{d}) \) be the standard deviations of the\;%
components of \( X_{i} \). Then \( Y_{i}=\diag(\sigma^{-1})X_{i} \) denotes the\;%
componentwise empirically normalized data.\;%
\BlankLine
{\bf Main Procedure:}\tcp*[r]{loop on \( k \)}%
\While{\( \sim StoppingCriterion(\cI,\hat{\cI}) \)}
{%
  \BlankLine
  {\bf Sampling:} The components of the {\em Monte-Carlo}-parts\;%
  of \( \xi_{j}^{(k)} \) and \( \omega_{jl}^{(k)} \) are randomly chosen from \( \cU_{[-1,1]} \).\;%
  The other part of the measurement directions are\;%
  initialized according to the structural adaptation\;%
  approach described in algorithm \ref{alg:adaptation}. Then \( \xi_{j}^{(k)} \) and\;%
   \( \omega_{jl}^{(k)} \) are normalized to unit length.\;%
   \BlankLine
   {\bf Linear Estimation Procedure:}\;%
    \For{j=1 \KwTo J}
    {\BlankLine
     \For{l=1 \KwTo L}
     {
       \( \hat{\eta}_{jl}^{(k)}=N^{-1}\sum_{i=1}^N\nabla h_{\omega_{jl}^{(k)}}(Y_i) \)\;
       \( \hat{\gamma}_{jl}^{(k)}=N^{-1}\sum_{i=1}^N Y_i h_{\omega_{jl}^{(k)}}(Y_i) \)\;
     }\BlankLine
     Compute the coefficients \( \{c_{l}\}_{l=1}^{L} \) by solving the\;%
     second-order conic optimization problem \rf{chat}:\;%
     \( \quad\quad\quad\quad\quad\min \; q  \qquad \mbox{s.t.} \)\;%
     \( \quad\quad\quad\quad\quad\quad\frac{1}{2}\|z\|_{2}\leq q \)\;%
     \( \quad\quad\sum_{l=1}^{L}(c_{l}^{+}-c_{l}^{-}) \hat\eta_{jl}^{(k)}-z=\xi_{j}^{(k)} \)\;%
     \( \quad\quad\quad\sum_{l=1}^{L} (c_{l}^{+}-c_{l}^{-}) \hat\gamma_{jl}^{(k)}=0 \)\;%
     \( \sum_{l=1}^{L} (c_{l}^{+}-c_{l}^{-})\leq 1, \quad 0\leq c_{l}^{+},c_{l}^{-} \quad \forall l \)\;%
     \BlankLine
     Compute \( \hat{\beta}_{j}^{(k)}=\sum_{l=1}^{L}(\hat{c}_{l}^{+}-\hat{c}_{l}^{-}) \hat{\eta}_{jl}^{(k)} \)\;
   \BlankLine
   }
   \BlankLine
   {\bf Dimension Reduction:}\;%
   Compute the symmetric matrix \( \hat{B}^{(k)} \) defining the\;%
   approximation of \( \cE \) according to algorithm \ref{alg:ellipsoid}.
   Reduce the basis of \( \cX \) according to algorithm \ref{alg:dimRed}.\;%
   \BlankLine
} \caption{full procedure of SNGCA}
\end{algorithm2e}\\[2ex]

{\bf Complexity:} We restrict ourselves to the leading polynomial terms of
the arithmetical complexity of corresponding computations counting only
the multiplications.
\begin{itemize}
 \item[1.] The numerical effort to compute \( \eta_{jl} \) and \( \gamma_{jl} \) in algorithm \ref{alg:estimation}
           heavily depends on the choice of \( h(\omega^{\T}x) \). Let \( h(\omega^{\T}x)=\tanh(\omega^{\T}x) \). Then
           this step takes \( \cO(J(\log N)^{2}N^{2}) \) operations.
 \item[2.] Algorithm \ref{alg:ellipsoid} takes \( \cO(d^{2}J\log(J)) \) operations \cite{Nestreov04}.
 \item[3.] For the optimization step in \ref{alg:estimation} we use a commercial solver\footnote{http://www.mosek.com}
           based on an interior point method. The constrained convex
           projection solved as an SOCP takes \( \cO(d^{2}n^3) \) operations there \( n \) is the
           number of constraints.
 \item[4.] Computation of the statistical tests in one dimension: Let \( N \) denote the
           number of samples. D'Agostino-Pearson-test needs \( \cO(N^3\log N)\) and the
           Anderson-Darling-test \( \cO((\log N)^{2}N^{2}) \) operations. The test of Shapiro-Wilks
           takes \( \cO(N^{2}) \). In order to avoid robustness problems \cite{Thode02}
           the number of samples is limited to \( N\leq 1000 \). For
           larger data sets, \( N=1000 \) points are randomly chosen.
\end{itemize}
Hence without tests \( \hat{\cI} \) is computed in \( \cO(J(\log
N)^{2}N^{2}+d^{2}J\log(J)+d^2n^3) \) arithmetical operations per iteration.

\section*{Acknowledgment}
We are grateful to Yuri Nesterov from the {\sc CORE},  Louvain-la-Neuve
for helpful discussions and Gilles Blanchard from the {\sc FIRST.IDA}
Fraunhofer Institute Berlin for the permission to republish the results of
NGCA.

\end{appendix}
\bibliography{literature}
\bibliographystyle{plain}

\end{document}